\documentclass[a4, 10pt,leqno]{amsart}
\usepackage{lmodern}
\usepackage[english]{babel}
\usepackage{microtype}
\usepackage{mathtools}
\usepackage{bbm}
\usepackage[utf8]{inputenc}
\usepackage[T1]{fontenc}
\usepackage{upgreek}
\usepackage{listings}
\DeclareMathAlphabet{\mathdutchcal}{U}{dutchcal}{m}{n}
\usepackage{amsmath,amssymb,amscd,amsthm,graphicx,enumerate}
\usepackage{mathrsfs}
\usepackage{amssymb,amsthm,a4wide}
\usepackage{nicefrac}
\newcounter{example}
\newenvironment{example}[1]{\refstepcounter{example}\par\medskip
	\noindent \textsc{\small Example~\theexample. #1} \rmfamily\hspace{-2pt}}{\medskip}
\usepackage{pifont}

\usepackage{color}
\usepackage[svgnames]{xcolor}
\usepackage{xcolor}
\definecolor{ultrablue}{rgb}{0.1,0.1,1}
\usepackage{hyperref}
\hypersetup{
	linktoc=all,
	colorlinks,
	linkcolor={ultrablue},
	citecolor={ultrablue},
	urlcolor={ultrablue}
}
\hypersetup{breaklinks=true}
\usepackage{cite}

\allowdisplaybreaks
\makeatletter 
\def\l@subsection{\@tocline{2}{0pt}{3pc}{6pc}{}}
\makeatother

\makeatletter
\@namedef{subjclassname@2020}{%
	\textup{2020} Mathematics Subject Classification}
\makeatother
\makeatletter 
\def\l@subsection{\@tocline{2}{0pt}{3pc}{6pc}{}}
 \makeatother
\usepackage[a4paper,bindingoffset=0in,%
left=1.2in,right=1.2in,top=1.2in,bottom=1.2in,
]
{geometry}
\theoremstyle{plain}
\newtheorem{lemma}{Lemma}[section]
\newtheorem{theorem}[lemma]{Theorem}

\theoremstyle{definition}
\newtheorem{definition}[lemma]{Definition}
\newtheorem{remark}[lemma]{Remark}

\newcommand{\introthmname}{}
\newtheorem{introthminn}{\introthmname}
\newenvironment{introthm}[1]
{\renewcommand{\introthmname}{#1}\begin{introthminn}}
	{\end{introthminn}}
\newcommand{\introcorname}{}

\numberwithin{equation}{section}

\newcommand{\fs}{\mathsf{C}_{\sf fs}}
\newcommand{\cb}{\mathsf{C}_{\sf b}}
\newcommand{\R}{\mathbb R}
\DeclareMathOperator{\Ric}{Ric}

\newcommand{\Was}{\mathcal{W}}
\newcommand{\m}{\mathdutchcal{m}}
\newcommand{\G}{\mathcal{G}}
\newcommand{\B}{\mathsf{B}}

\newcommand{\T}{\mathrm{T}}

\newcommand{\LL}{\mathsf{L}}
\newcommand{\Lip}{\mathsf{Lip}}
\newcommand{\dist}{\mathrm{d}}

\newcount\stylenum \newdimen\styledim 
\def\varstyle#1{\mathchoice{\stylenum=0 #1}{\stylenum=1 #1}{\stylenum=2 #1}{\stylenum=3 #1}}
\def\mathaxis{\fontdimen22\ifcase\stylenum 
	\textfont\or\textfont\or\scriptfont\or\scriptscriptfont\fi2 }
\def\setstyledim{\styledim=\ifcase\stylenum .1em\or.1em\or.07em\or.05em\fi\relax}

\def\sqdot{\mathbin{\varstyle{\raise\mathaxis\hbox{\setstyledim
				\kern\styledim 
				\vrule width1.2\styledim height.6\styledim depth.6\styledim
				\kern\styledim}}}}
\newcommand{\ident}{\raisebox{0pt}{\scalebox{1.1}{$\mathbbm{1}$}}\hspace{-1pt}}

\newcommand{\eps}{\varepsilon}
\renewcommand{\epsilon}{\varepsilon}
\newcommand{\restr}{\raisebox{-.1908ex}{$\big|$}}
\newcommand{\Lo}{\mathcal{L}}
\newcommand{\Dom}{\sf Dom}
\DeclareMathOperator{\supp}{supp}
\newcommand\dunderline[3][-1pt]{{%
		\sbox0{#3}%
		\ooalign{\copy0\cr\rule[\dimexpr#1-#2\relax]{\wd0}{#2}}}}
\newcommand\smallO{
	\mathchoice
	{{\scriptstyle\mathcal{O}}}
	{{\scriptstyle\mathcal{O}}}
	{{\scriptscriptstyle\mathcal{O}}}
	{\scalebox{.7}{$\scriptscriptstyle\mathcal{O}$}}
}
\usepackage{array}%
\usepackage{esvect}
\usepackage[super]{nth}
\begin{document}
\title[\tiny Ricci curvature of semigroup walks and generator representations]{\small Ricci curvature of semigroup walks on graphs\\ and generator representations} 
\address{Sajjad Lakzian \\ \newline \indent Department of Mathematical Sciences\\ \newline \indent Isfahan University of Technology (IUT) \\  \newline \indent Isfahan 8415683111, Iran}
\email{\href{mailto:slakzian@iut.ac.ir}{slakzian@iut.ac.ir}}
\subjclass[2020]{Primary: 53C21, 47Dxx; Secondary: 05Cxx, 51Fxx}
\keywords{Weighted graph, random walk, Ollivier--Ricci curvature, semigroup, infinitesimal generator, Markov semigroup}
\maketitle
\vspace{-14pt}
\begin{center}
\begin{minipage}[t]{0.5\textwidth}
	\centering
	\textbf{Sajjad Lakzian}
\end{minipage}
\end{center}
\begin{abstract}
		\par \textsl{We present an operator-theoretic framework for the Ollivier--Ricci curvature of (one-parameter families of) random walks generated by positive semigroups on graphs. On a metrically complete graph $(\G,\dist)$ that is $\dist$-locally finite, we introduce a class of admissible semigroup random walks whose infinitesimal first-order approximation is itself a random walk. We prove that the Ollivier--Ricci curvature of an admissible semigroup random walk is well-defined and completely determined by its first-order approximation. We then establish a variational representation of the curvature of random walks in terms of a notion of the initial-time generator. This representation applies in particular to admissible semigroup random walks. Finally, we use this representation to define an operator-theoretic Ollivier--Ricci curvature for operators acting on spaces of functions over general metric spaces and establish sufficient conditions for well-definedness. This framework contains, in particular, Markovian random walks associated with discrete Laplacians.}
\end{abstract}
\date{\today}
\tableofcontents
\section{Introduction}
\par Ricci curvature, one of the central notions in Riemannian geometry, encodes the interaction between the metric structure, geodesic behavior, diffusion, and volume growth. Extending the concept of Ricci curvature beyond smooth manifolds has led to several notions of Ricci curvature (and Ricci curvature lower bounds) for metric and measure spaces. Among these approaches, Lott--Sturm--Villani and Ollivier--Ricci curvature bounds have received particular attention due to their intrinsic connection with optimal transport and the contraction properties of the heat flow.
\par For weighted graphs and/or Markov chains, several notions of Ricci curvature bounds have been developed from different perspectives. These include transport-based curvature bounds inspired by the Lott--Sturm--Villani theory~\cite{BS,Erbar-Maas}, discrete Bakry--\'Emery curvature bounds~\cite{LY,CLP,FL}, Ollivier--Ricci curvature defined through Wasserstein contraction of random walks initiated in~\cite{Ol}, Lin--Lu--Yau curvature (based on Ollivier's approach) for certain classes of lazy random walks~\cite{LLY}, and Forman--Ricci curvature arising from combinatorial Bochner-type decomposition~\cite{For}. Each of these theories captures different geometric features and is suited to different applications.
\par Among these approaches, Ollivier--Ricci curvature is particularly natural from the viewpoint of Markov semigroups. It measures curvature through the infinitesimal contraction rate of the Wasserstein distance between probability measures generated by a random walk. This perspective is closely related to heat-flow characterizations of Ricci curvature and contraction properties of diffusion semigroups as established in~\cite{Ol,MW}.  
\par The Ollivier--Ricci curvature of a one-parameter family of random walks $\upmu_z^\varepsilon$ is given by
\begin{equation*}
	\Ric(x,y) := \text{-} \, \dist(x,y)^{-1}\;\nicefrac{d^+}{d\varepsilon}\restr_{\varepsilon=0} \;\Was_1\left( \upmu_x^{\varepsilon}, \upmu_y^{\varepsilon} \right),\quad \text{for} \quad x\neq y.
\end{equation*}
if the limit (the right derivative at $\eps = 0$) exists. \emph{Hereafter, we will simply refer to the one-parameter family of random walks $\upmu_z^\varepsilon$ as a random walk}.
\par On graphs, the Ricci curvature has been studied mainly for time-affine lazy random walks (see \hyperref[defn:lazy-walk]{Definition~\ref{defn:lazy-walk}}) and random walks generated by the general discrete Laplacian. In the aforementioned setting, a variational or limit-free characterization has been obtained and several similarities with classical Ricci curvature have been established; see e.g.~\cite{MW}. However, much less is known for random walks generated by more general operators. 
\par This leads to the following fundamental questions:
\begin{enumerate}
	\item [] \textbf{\small Question 1:} For which classes of random walks is the Ollivier--Ricci curvature well-defined?
	\smallskip
	\item [] \textbf{\small Question 2:} Can the Ollivier--Ricci curvature be characterized by a variational (limit-free) formulation?
\end{enumerate}
\subsection*{The aim of this paper}
\par The purpose of this paper is to develop an operator-theoretic framework for the Ollivier--Ricci curvature of random walks generated by positive semigroups on graphs thereby addressing the questions posed above for a broad class of semigroup walks generated by semi-local operators. 
\par Key concepts introduced in this paper include: 
\begin{itemize}
	\item Normal forms for general random walks; see \hyperref[subsec:normal-form]{Section~\ref{subsec:normal-form}}.
	\smallskip
	\item Operator-theoretic concepts of quasi-symmetry and semilocality; see \hyperref[subsubsec:sym-loc]{Section~\ref{subsubsec:sym-loc}}.
	\item The conical-supremum construction which is a way to extend the domain of a positive operator in the direction of a cone of functions at the cost of losing the full linearity in the domain and the operator; see \hyperref[subsubsec:conical-sup]{Section~\ref{subsubsec:conical-sup}}.
	\smallskip
	\item The notion of operator-theoretic Ricci curvature; see \hyperref[sec:op-theo-ric]{Section~\ref{sec:op-theo-ric}}.
\end{itemize}
\subsection*{Overview of Main Results}
Consider a triple $\left( \G,\dist\right)$, where $\G$ is a simple graph and $\dist$ is a metric on $\G$ such that $(\G,\dist)$ is complete and hence Polish. We further assume that $\G$ is $\dist$-locally finite, meaning that every metric ball is finite. 
\par We consider an important class of random walks: semigroup walks generated by a suitable $\mathsf C^0$-semigroup of operators satisfying admissibility conditions; see \hyperref[sec:semigroup-walks]{Section~\ref{sec:semigroup-walks}} for the required operator-theoretic preparation and details.
\begin{definition} \label{def:adms-semigrp-walk}
		Suppose $\left(\B, \|\cdot\|\right)$ is a Banach space of functions containing $\mathsf{C}_{\sf b}$. Let $\T_\eps: \B \to \B$ be a $\mathsf{C}^0$-semigroup of operators that is $2$-regular, positive and formally stochastically complete and the domain of the infinitesimal generator $\Lo$ contains $\Lip(\G) \cap \mathsf{C}_{\sf b}$.
	\begin{itemize}
\item \textbf{\small Admissible forward semigroup walk:} Suppose
$
\upmu_z^\epsilon = \T_\eps(\delta_z)
$
is a random walk. It will be called an admissible forward semigroup walk if the following conditions are satisfied:
\begin{enumerate}
	\smallskip
	\item [(F1)]\label{item:admis-walk-b} The \nth{2} order remainder 
	$
	\mathcal{R}^2_\eps := \T_\varepsilon-(\ident+\varepsilon\mathcal{L})
	$
	is uniformly (in $\varepsilon$) roughly quasi-symmetric.
	\smallskip
	\item [(F2)]\label{item:admis-walk-c} The infinitesimal generator $\mathcal{L}$ is strongly semi-local at a controlled scale, comparable to multiplication and bounded below in the sense of \hyperref[defn:bounded-below]{Definition~\ref{defn:bounded-below}}.
\end{enumerate}
\medskip
\item \textbf{\small Admissible backward semigroup walk:} 
Suppose 
$
\upmu_z^\epsilon(w) = \T_\eps(\delta_w)(z)
$
defines a random walk. It will be called an admissible backward semigroup walk if the following conditions are satisfied:
\begin{enumerate}
	\smallskip
	\item [(B1)]\label{item:back-admis-walk-b} The infinitesimal generator $\mathcal{L}$ is uniformly semi-local, comparable to multiplication and bounded below in the sense of \hyperref[defn:bounded-below]{Definition~\ref{defn:bounded-below}}.
\end{enumerate}
\end{itemize}
\end{definition}
\par We note that for the admissible backward semigroup walk, the conditions on the generator are natural in the sense that they are modeled on the properties of the general discrete Laplacian. For admissible forward semigroup walks, the symmetry condition generalizes the symmetry of the normalized Laplacian.
\par Our first main result is that admissible semigroup walks can be approximated to the second order in Wasserstein distance by their $1$-jets 
$
	{}^1\upmu_z^\varepsilon
	:= (\ident + \eps \Lo) \delta_z
$,
and consequently, $\Ric$ is well-defined and the Ollivier--Ricci curvature of the corresponding $1$-jet random walk, ${}^1\!\Ric$. This is established using the key idea of conical-supremum extension, $\widetilde{^\mathsf{S}\T}$, of a positive operator, $\T$; this is a concept that is introduced in \hyperref[subsubsec:conical-sup]{Section~\ref{subsubsec:conical-sup}}. 
\begin{introthm}{\bf Main Theorem}[Well-definedness of $\Ric$]\label{thm:main-1}
Let $\upmu_z^\epsilon = \T_\eps(\delta_z)$ be an admissible forward or backward semigroup walk. Then, for all $z\in \G$, we have 
\begin{equation*}
	\Was_1\left(\upmu_z^\varepsilon,{}^1\upmu_z^\varepsilon\right)
	=
	\mathcal{O}\left(\varepsilon^2\right)
	\qquad\text{as}\qquad
	\varepsilon\downarrow0.
\end{equation*}	
	Consequently, $\Ric$ is well-defined and coincides with ${}^1\!\Ric$.	 
\end{introthm}
\par Our second main result applies to general random walks. It provides a representation of the upper Ollivier--Ricci curvature, $\Ric^+(x,y)$, by a variational formula in terms of a generalized notion of generator for random walks; see \hyperref[subsubsec:ct-olric]{Section~\ref{subsubsec:ct-olric}} for the definition of  $\Ric^+(x,y)$. This variational representation in terms of the generator requires suitable second-order asymptotics and continuity assumptions on the generator. 
\par Let $\upmu_z^\varepsilon$ be a given random walk (not necessarily a semigroup walk) with the averaging operator $\mathscr{A}^\epsilon$, integrated Laplacians $\mathscr{L}^\epsilon$ and backward initial generator $\mathscr L$; see \hyperref[subsec:normal-form]{Section~\ref{subsec:normal-form}} for the definitions of these objects. See \hyperref[eq:derivative]{\eqref{eq:derivative}} for the definition of the directional derivative $\nabla_{\vv{xy}} f$.
\begin{introthm}{\bf Main Theorem}[Variational characterization of $\Ric^+$]\label{thm:main-2}
	Suppose that the following hypotheses hold:
	\begin{enumerate}
		\item [(V1)]\label{item:lff-1} $\mathscr{L}^\epsilon$ (equivalently $\mathscr{A}^\epsilon$) has uniform second-order asymptotics at $\eps = 0$ along $\Lip_1(\G) \cap \fs$ (see \hyperref[subsubsec:unif-asymp]{Section~\ref{subsubsec:unif-asymp}});
		\smallskip
		\item [(V2)]\label{item:lff-2} $\mathscr{L}$ is continuous with respect to the pointwise convergence of finitely supported functions.
	\end{enumerate}
	Then
	\begin{equation*}
		\Ric^+(x,y)
		\le
		\inf_{\substack{f \in \Lip(1)\\ \nabla_{\vv{xy}} f = 1 }}
		\nabla_{\vv{yx}} \mathscr{L}(f).
	\end{equation*}
	If, furthermore, $\mathscr{L}$ is semi-local at a controlled scale and $\Ric^+(x,y)$ is finite, then
	\begin{equation*}
		\Ric^+(x,y)
		=
		\inf_{\substack{ \\ f \in \Lip(1) \\ \nabla_{\vv{xy}} f = 1 }}
		\nabla_{\vv{yx}} \mathscr{L}(f)
		=
		\inf_{\substack{ \\ f \in \Lip(1) \cap \fs\\ \nabla_{\vv{xy}} f = 1 }}
		\nabla_{\vv{yx}} \mathscr{L}(f).
	\end{equation*}
\end{introthm}
It is worth noting that a sufficient condition for \hyperref[item:lff-2]{(V2)} to hold is if $\mathscr{L}$ is continuous w.r.t. the point-open topology.
\par As a main consequence of \hyperref[thm:main-2]{Main Theorem~\ref{thm:main-2}}, for admissible semigroup walks, we obtain a variational representation of $\Ric$ in terms of the infinitesimal generator. 
\begin{introthm}{\bf Main Corollary}[Generator representations of $\Ric$]\label{cor:main-2}
	\hfill
	\begin{enumerate}
		\item \label{item:GR1}
		For an admissible forward semigroup walk $\upmu_z^\epsilon = \T_\eps(\delta_z)$ with infinitesimal generator $\Lo$ for the semigroup $\T_\eps$, we have
		\begin{equation*}
			\Ric(x,y)
			=
			\inf_{\substack{f \in \Lip(1)\cap\fs\\
					\nabla_{\vv{xy}}f=1}}
			\left<f, \Lo \left(\delta_x - \delta_y \right)\right>
			=
			\inf_{\substack{f \in \Lip(1)\cap \cb\\
					\nabla_{\vv{xy}}f=1}}
			\left<f, \Lo \left(\delta_x - \delta_y \right)\right>.
		\end{equation*}
		If $\mathcal{L}$ is further roughly symmetric (see \hyperref[subsubsec:sym-loc]{Section~\ref{subsubsec:sym-loc}}), then
		\begin{align}\label{eq:var-char-neat}
			&\Ric(x,y)\\
			&=
			\inf_{\substack{f \in \Lip(1) \cap \fs\\
					\nabla_{\vv{xy}} f = 1 }}
			\nabla_{\vv{yx}} \mathcal{L}(f)
			=
			\inf_{\substack{f \in \Lip(1)\cap \cb\\
					\nabla_{\vv{xy}} f = 1 }}
			\nabla_{\vv{yx}} \mathcal{L}(f)
			=
			\inf_{\substack{f \in \Lip(1)\cap \Dom(\Lo)\\
					\nabla_{\vv{xy}} f = 1 }}
			\nabla_{\vv{yx}} \mathcal{L}(f).\notag
		\end{align}
		\item \label{item:GR2} For an admissible backward semigroup walk $\upmu_z^\epsilon(w) = \T_\eps(\delta_w)(z)$ with the property that the infinitesimal generator $\Lo$ of $\T_t$ is continuous w.r.t. the pointwise convergence of finite support functions, the generator representation \hyperref[eq:var-char-neat]{\eqref{eq:var-char-neat}} holds true. 
	\end{enumerate}
\subsection*{Organization of the Materials}
\par In \hyperref[sec:prelim]{Section~\ref{sec:prelim}}, we collect the necessary preliminaries, including the function spaces and basic analytic tools used throughout the paper. \hyperref[sec:semigroup-walks]{Section~\ref{sec:semigroup-walks}} introduces semigroup walks and develops the operator-theoretic framework. \hyperref[sec:ct-olric]{Section~\ref{sec:ct-olric}} is devoted to proving the existence and identification of Ollivier--Ricci curvature for admissible semigroup walks. \hyperref[sec:var]{Section~\ref{sec:var}} presents the variational characterization, and finally, \hyperref[sec:op-theo-ric]{Section~\ref{sec:op-theo-ric}} introduces the operator-theoretic Ricci curvature of operators. 
\addtocontents{toc}{\protect\setcounter{tocdepth}{-1}}
\section*{\small \bf  Acknowledgments}
\addtocontents{toc}{\protect\setcounter{tocdepth}{1}}
\begin{minipage}[t]{0.87\textwidth}
	\small
	\renewcommand{\labelitemi}{\raisebox{2pt}{\scalebox{1.2}{$\centerdot$}}}
		\begin{itemize}
			\item This work is based upon research funded by Iran National Science Foundation
			(INSF) under project No. 4030556.
	\end{itemize}
\end{minipage}
\normalsize
\vspace{10pt}
\section{Preliminaries}\label{sec:prelim}
\par Throughout this paper, $\left( \G, \dist \right)$ denotes a simple, $\dist$-locally finite graph. We further assume that $\left( \G, \dist \right)$ is a complete metric space.
\par It is important to note that for the special and frequently used class of weighted path metrics, the completeness is equivalent to the finiteness of all metric balls ($\dist$-local finiteness), by a Hopf--Rinow-type theorem~\cite{HKMW}. 
\subsection{Discrete calculus rudiments}\label{subsec:rudiments}
\par Conventionally, $x\sim y$ means that there is an edge between $x$ and $y$. 
\par For $x,y\in\G$, the directional derivative of $f$ from $x$ to $y$ is defined by
\begin{equation}\label{eq:derivative}
\nabla_{\vv{xy}} f := \dist(x,y)^{-1} \left( f(y) - f(x) \right).
\end{equation}
\par Given an edge weight function $\omega_{xy} := \omega(x,y)$, the general form of the discrete Dirichlet Laplacian in the finite regime is given by
\begin{equation*}
\Delta_{\omega, \mathdutchcal{g} , \m} f (x):= \m(x)^{-1}\left( \sum_{x \sim y} \left( f(y) - f(x)  \right) \omega_{xy} + \mathdutchcal{g}(x)f(x)\right),
\end{equation*}
where $\mathdutchcal{g}$ is a non-negative function~\cite{KLW}.
\par The special case of the combinatorial graph Laplacian $\Delta_{\sf c}$ is obtained by setting $\mathdutchcal{n} \equiv 1$, $\omega \equiv 1$, and $\mathdutchcal{g}\equiv0$. Another common operator is the normalized Laplacian, obtained by setting
$
\m(x) = \sum_{y\sim x} \omega_{xy}
$
and
$
\mathdutchcal{g}\equiv0
$.
\par \emph{The normalized Laplacian is particularly important because it serves as the infinitesimal generator of the standard weighted random walk which is the semigroup walk (in our terminology) associated with the heat semigroup; see, e.g.,~\cite{Barlow}.}
\subsection{Important (function) spaces}\label{subsubsec:imp-func-spaces}
\subsubsection{\small \textsf{\textbf{Standard spaces}}}
\par $\mathsf{C}_{\sf fs}(\G)$ denotes the space of finitely supported functions, $\Lip_{1}(\Omega)$, the space of $1$-Lipschitz functions on a subset $\Omega$, $\Lip_{1}(\Omega)$, on a subset $\Omega$, $\mathsf{C}_0(\G)$, the space of functions vanishing at infinity and $\mathsf{C}_{\sf b}$, the space of bounded functions. Note that $\fs$ coincides with the space of compactly supported functions, $\mathsf{C}_{\sf cs}$, since the distance, $\dist$, is assumed to be \emph{locally finite}.
\par The distance-weighted $p$-summable functions is defined by
\begin{equation*}
\ell_{\dist}^i(\G) := \ell^i(\G) \cap \LL^i(\G, \dist(z,\cdot)\texttt{\#}) = \LL^i(\G, \dist(z,\cdot)\texttt{\#}) \quad (\texttt{\#}: \text{counting measure}),
\end{equation*}
for some (and hence any) reference point $z \in \G$. The equality follows from the $\dist$-local finiteness. Similarly, we define $\ell^i_{\dist}$.
\par We denote the space of probability measures on $\G$ by $\mathscr{P}(\G)$, and let $\mathscr{P}_1(\G)$ denote the subspace of Borel probability measures with \emph{finite first moments}.
\subsubsection{\small \textsf{\textbf{Functions with reasonable Taylor remainders}}}\label{subsubsec:TR}
\par An important regularity that generalizes $\mathsf{C}^k$-smoothness is $\mathsf{TR}$-regularity. This space provides minimal conditions for uniqueness of normal forms for general random walks. 
\par The function space $\mathsf{TR}_0^{n+1,1}$ (TR stands for Taylor remainder), is defined as follows. Consider the class of functions
$
f: I \times \G^2 \to \R
$,
where $I$ is an open neighborhood of $0$, that are differentiable at $\varepsilon=0$ and admit an expansion of the form
\begin{equation*}
f(\varepsilon, z,w) = f(0,z,w) + \nicefrac{d}{d\varepsilon}\restr_{\varepsilon = 0} f(\cdot,z,w)\varepsilon + \cdots + \nicefrac{1}{n!} \; \nicefrac{d^n}{d\varepsilon^n} f(\cdot,z,w) \epsilon^n + R(\epsilon,z,w),
\end{equation*}
where, for all $z\in \G$,
\begin{equation}\label{eq:TR}
	f(0,z,\cdot),\quad \nicefrac{d}{d\varepsilon}\restr_{\varepsilon = 0}f(\cdot,z,\cdot), \quad \cdots, \quad \nicefrac{d^n}{d\varepsilon^n}\restr_{\varepsilon = 0} f(\cdot,z,\cdot)\quad  \text{and} \quad R(\epsilon,z,\cdot) \in \ell^1.
\end{equation}
Furthermore, we assume that the series
$
\varepsilon^{-1}\sum_{w\in \G} \left| R(\epsilon,z,w) \right|
$
converges locally uniformly in $\varepsilon$.
\par The function space $\mathsf{TR}_0^{n+1,1}$ then consists of the germs of such functions at $\varepsilon = 0$. With a slight abuse of notation, $f \in \mathsf{TR}_0^{n+1,1}$ means that the germ of $f$ belongs to $\mathsf{TR}_0^{n+1,1}$.
\par The distance-weighted spaces
$
{}^{\dist}\mathsf{TR}_0^{n+1,1} \subseteq \mathsf{TR}_0^{n+1,1}
$
are defined similarly, by requiring the functions in \hyperref[eq:TR]{\eqref{eq:TR}} to belong to $\ell^1_{\dist}$ and requiring the series
$
\varepsilon^{-1}\dist(z,\cdot)\sum_{w\in \G} \left|R(\epsilon,z,w)\right|
$
to converge locally uniformly in $\varepsilon$. \emph{Note that, on finite graphs, every $\mathsf{C}^{n+1}$ function $f$ (with domain of the form specified above) belongs to ${}^{\dist}\mathsf{TR}_0^{n+1,1}$.}
\subsubsection{\small \textsf{\textbf{General Banach spaces of functions}}}
\par On various occasions, we will need to work with general ``Banach spaces of functions'' not to be confused with ``Banach function spaces'' (which are Banach spaces satisfying the Bennett--Sharpley conditions).
\par Throughout this paper, $\B$ denotes a linear subspace of $\R^\G$, and $\|\cdot\|$ is a norm on $\B$ such that $\left(\B,\|\cdot\|\right)$ is a Banach space of functions. We further assume that $\B$ contains the finitely supported functions, $\fs$. The subspace of zero-mass signed measures in $\B$ will be denoted by $\B_0$. In general, $\B_0$ need not be closed in $\B$. 
\par A cone of functions $\mathsf{S}$ is a subset of $\R^\G$ satisfying 
$
0\in\mathsf{S}
$
and
$
\lambda\mathsf{S}\subseteq \mathsf{S}
$, 
for all $\lambda\geq0$.
\subsection{$\pmb{\mathsf{C}^{0}}$-semigroups of operators}\label{subsubsec:semigroups}
\par We will be dealing with $\mathsf{C}^{0}$-semigroups of linear operators $\T_t$, $t\geq 0$, on a Banach space of functions $\B$ on $\G$ with closed and densely defined infinitesimal generator $\mathcal A$.
\subsubsection{\small \textsf{\textbf{Taylor's theorem and $\pmb{k}$-regularity}}}
The Taylor-type formula with integral remainder
\begin{equation}\label{eq:Taylor}
\T_t f
=
\sum\limits_{i=0}^{n-1} \frac{t^i}{i!}\mathcal{A}^i f
+
\frac{1}{(n-1)!}\int_0^t (t-s)^{n-1}\T_s \; \mathcal{A}^n f \; ds,
\qquad
\forall f\in\mathsf{Dom}(\mathcal{A}^n),\quad t\geq 0,
\end{equation}
holds, where the integration is understood in the sense of Bochner. 
\par As a result, $f\in\mathsf{Dom}(\mathcal{A}^k)$ is equivalent to the orbit of $f$ being $\mathsf{C}^k$ and admitting a $\mathsf{C}^k$ extension beyond $t=0$. Consequently, when $f\in\mathsf{Dom}(\mathcal{A}^k)$, all the standard Banach-space $k$-term Taylor theorems, with their various forms of remainder, can be applied to $\T_t(f)$.
\par Evaluating both sides of \hyperref[eq:Taylor]{\eqref{eq:Taylor}} at a $w\in\G$, we can move the evaluation at $w$ inside the remainder integral to get a Riemann integral, since $\mathsf{C}^{0}$-semigroups are regulated; see~\cite[Proposition 3.7]{Cole}. Moreover, the Taylor series converges on a core consisting of analytic vectors.
\begin{definition}[$k$-regular semigroups]
	We say that a $\mathsf{C}^{0}$-semigroup $\T_\varepsilon$ is $k$-regular whenever $\mathsf{Dom}\left(\mathcal{A}^k\right)$ contains $\fs$. We call it $\infty$-regular if it is $k$-regular for every $k\in \mathbb{N}$.
\end{definition}
\subsubsection{\small \textsf{\textbf{Integral semigroups}}}\label{subsubsec:integral-smgrp}
\par If the $\mathsf{C}^{0}$-semigroup is given by integration against a time-dependent kernel $k(t,x,dy)$, we call it an \emph{integral semigroup}. Two important families of integral semigroups include Markov and Feller semigroups; see~\cite{BGL,Taira}.
\subsection{Random walks}
\par A random walk on $\G$ consists of a collection of Borel probability measures $\upmu_z$, $z\in\G$, with \emph{finite first moments}; that is, $\upmu_z\in\mathscr{P}_1(\G)$. On metric measure spaces, one also imposes the measurable dependence on the base point $z$; a condition that is automatically satisfied in the discrete setting. The measure $\upmu_z$ determines, once at $z$, where and with what probability a drunkard walking on the graph moves next.
\par In order to capture the geometry of the space at different scales, one must consider a one-parameter family of random walks. \emph{In this paper, we work with random walks 
	$
	\upmu_z^\varepsilon
	$,
	for $0\le \varepsilon\le 1$ that satisfy the natural and geometrically intuitive condition $\upmu_z^0=\delta_z$ (standing assumption throughout).}
\begin{definition}[Lazy random walks]\label{defn:lazy-walk}
	A random walk $\upmu_z^\varepsilon$ is called \emph{lazy} if $
		\operatorname{supp}(\upmu_z^\varepsilon) \subset \mathrm{B}_1(z)
	$, for all $z\in\G$ and $\eps\ge 0$; here, $\mathrm{B}_1(z)$ is the combinatorial unit ball ($z$ and all vertices connected to $z$). 
\end{definition}
\subsubsection{\small \textsf{\textbf{Markovian walks}}}
\par Markovian walks are particularly important from both geometric and analytic points of view. Analytically, every Markov semigroup $\T_t$ on the Banach space $\mathsf{C}_{\sf b}(\G)$ is the transition semigroup of a (normal càdlàg) Markov process on $\G$, with transition probabilities given by
$
	p(t,z,dy)=\mathrm{T}_t(\delta_z)(y)
$;
see, e.g., \cite{Taira}. Geometrically, the coarse Ricci curvature of certain Markovian walks, such as heat-kernel random walks, reflects geometric properties of the underlying graph; see, e.g., \cite{Ol,MW}, among many other works in this direction.
\subsection{Coarse Ollivier--Ricci curvature}\label{subsubsec:ctoRic}
\par The coarse Ollivier--Ricci curvature associated with a one-parameter family of random walks between $x$ and $y$ ($x\neq y$) is defined, following Ollivier~\cite{Ol}, by
\begin{equation*}
\Ric_\varepsilon(x,y)
:=
1-\frac{\Was_1\left(\upmu_x^\varepsilon,\upmu_y^\varepsilon\right)}
{\dist(x,y)},
\qquad 0\leq\varepsilon\leq 1,
\end{equation*}
where $\Was_1$ denotes the $L^1$-Wasserstein distance. 
\par In the setting of Riemannian manifolds with $\Ric\geq K$, and using uniform measures $\upmu_z^\varepsilon$ on $\B_\varepsilon(z)$ as $\varepsilon$-step random walks, the coarse Ricci curvature exhibits the following precise asymptotic behavior in terms of distance and Ricci curvature:
\begin{equation}\label{eq:cto-riem}
	\Ric_{\varepsilon}(x,y)
	=
	\nicefrac{\varepsilon^2}{2(N+2)}
	\,\Ric\left(
	\frac{\vv{xy}}{\|\vv{xy}\|},
	\frac{\vv{xy}}{\|\vv{xy}\|}
	\right)
	+\mathcal{O}\left(\varepsilon^3 + \eps^2 \dist(x,y)\right);
\end{equation}
as
$
(\eps, \dist(x,y)) \downarrow (0,0)
$. Here, $\vv{xy}$ denotes the vector in the tangent space satisfying $\exp_x(\vv{xy})=y$~\cite{Ol}.
\subsection{Ollivier--Ricci curvature}\label{subsubsec:ct-olric}
\par The Ollivier--Ricci curvature $\Ric(x,y)$ is defined, following Ollivier, as the derivative
\begin{equation*}
\Ric(x,y)
:=
-\dist(x,y)^{-1}
\;\nicefrac{d^+}{d\varepsilon}\restr_{\varepsilon=0}
\;\Was_1\left(\upmu_x^\varepsilon,\upmu_y^\varepsilon\right),
\qquad \text{for }x\neq y,
\end{equation*}
\emph{whenever the one-sided derivative exists}~\cite{Ol}. The original definition is intended for a random walk generated by a Markov kernel. However, the same definition can be used for more general one-parameter families of random walks. We note that, for the definition of Ollivier--Ricci curvature, we only need to know the germ of the random walk at $\varepsilon=0$.
\par To gain more intuition on the Ollivier--Ricci curvature, it is important to note that, in the Riemannian setting, by \hyperref[eq:cto-riem]{\eqref{eq:cto-riem}}, for uniform-measure random walks (or semigroup random walk given by the heat flow), $\Ric$ would be identically zero. Yet, upon nonlinearly rescaling the time parameter by taking its square root, for the random walk
$
\upeta_z^\eps :=  \upmu_z^{\sqrt{2(n+2)\eps}}
$,
it holds
\begin{equation*}
\Ric(x,y) := \Ric\left(
\frac{\vv{xy}}{\|\vv{xy}\|},
\frac{\vv{xy}}{\|\vv{xy}\|}
\right);
\end{equation*}
thus, one recovers the actual Ricci curvature. This means, infinitesimally, certain natural random walks on a manifold and on Euclidean space give rise to the same Ollivier--Ricci curvature reflecting the fact that infinitesimally (by the Taylor expansion of the Riemannian exponential map), geodesics on Riemannian manifolds are Euclidean up to the second order. Hence, one must rescale time to capture the second-order term governed by curvature. 
\subsubsection{\small \textsf{\textbf{Upper/Lower Ollivier--Ricci curvature}}}
\par We define the upper Ollivier--Ricci curvature, with values in the extended real numbers, by the upper Dini derivative at $\eps= 0$,
\begin{equation*}
\Ric^+
:=
\varlimsup_{\epsilon\downarrow0} \;
\nicefrac{1}{\eps}
\left(
1-\frac{\Was_1\left(\upmu_x^{\varepsilon},\upmu_y^{\varepsilon}\right)}
{\dist(x,y)}
\right),
\end{equation*}
and define the lower Ollivier--Ricci curvature, $\Ric^-$, similarly by the lower Dini derivative at $\eps = 0$. In general, we have
$
\Ric^-\neq\Ric^+
$;
see~\cite{Veys} for some examples.
\section{Operator theory preparation and Semigroup walks}\label{sec:semigroup-walks}
\subsection{Operator-theoretic tools and concepts}\label{sec:opt-rw}
\subsubsection{\small \textsf{\textbf{Needed operator-theoretic properties}}}\label{subsubsec:sym-loc}
\par Throughout this section, 
$
\mathcal{A}: \R^\G\supseteq \mathsf{Dom}(\mathcal{A}) \to \R^\G
$
is a linear map. We assume the domain and range of $\mathcal{A}$ are equipped with norms. 
\begin{definition}[(rough) quasi-symmetry]\label{defn:quasi-sym}
	\hfill
	\begin{itemize}
		\item Suppose
		$
		1\leq\Uptheta<\infty
		$. 
		$\mathcal{A}$ is said to be \emph{roughly $\Uptheta$-quasi-symmetric} if
		\begin{equation}\label{eq:quas-sym}
		\frac{1}{\Theta}\left|\left<f,\mathcal{A}g\right>\right|\le \left|\left<\mathcal{A}f,g\right>\right|
			\leq
			\Uptheta\left|\left<f,\mathcal{A}g\right>\right|,
			\qquad
			\forall f,g\in  \mathsf{Dom}(\mathcal{A}) \cap \fs,
		\end{equation}
		holds with respect to the inner product in $\ell^2(\G)$.
		\smallskip
		\item When
	$
			\left<\mathcal{A}f,g\right>
			=
			\left<f,\mathcal{A}g\right>
	$, for all $f,g\in\mathsf{Dom}(\mathcal{A})\cap\fs$,
		we call $\mathcal{A}$ \emph{roughly symmetric}.
		\smallskip
		\item A \emph{symmetric operator}
		$
		\mathcal{A}:\mathsf{Dom}(\mathcal{A})\subseteq \ell^2(\G)\to\ell^2(\G)
		$
		is an operator with a dense domain satisfying the above symmetry relation for all $f,g$ in its domain. If, instead, the inequality~\hyperref[eq:quas-sym]{\eqref{eq:quas-sym}} holds for all $f,g$ in the domain of $\mathcal{A}$, we call $\mathcal{A}$ a \emph{$\Uptheta$-quasi-symmetric operator}.
	\end{itemize}
\end{definition}
\par A simple observation is that the rough quasi-symmetry implies that for any fixed $g \in \mathsf{Dom}(\mathcal{A}) \cap \fs$, we have
\begin{equation*}
\sup_{f \in \fs \cap \Dom(\mathcal A)} \left<\mathcal Af, g \right> \le \Theta \sup_{f \in \fs \cap \Dom(\mathcal A)} \left<f, \mathcal A g \right>.
\end{equation*}
\begin{definition}[Semi-local operator]\label{defn:semi-loc}
	Let
$
R:\G\to\R^+
$
be a positive radius function. 
\begin{itemize}
	\item $\mathcal{A}$ is said to be $R$-semi-local whenever, for each $x_0$ and $f\in\mathsf{Dom}(\mathcal{A})$,
	\begin{equation*}
		f\restr_{\B_{R(x_0)}(x_0)}=0
		\quad \Longrightarrow\quad 
		\mathcal{A}(f)(x_0)=0.
	\end{equation*}
	\emph{$\mathcal A$ is said to be semi-local whenever it is semi-local with respect to some positive function $R$.}
	\item When $R\equiv R_0$ is constant, we say $\mathcal A$ is uniformly semi-local at scale $R_0$.
	\smallskip
	\item If $\mathcal A$ is $R$-semi-local and $\mathcal A(\fs) \subseteq \fs$, then we call $\mathcal A$, a strongly $R$-semi-local operator. 
	\smallskip
	\item $\mathcal A$ is said to be semi-local at a controlled scale whenever there exists a non-decreasing function
	$
	s: \R^{\ge 0} \to \R^{\ge 0}
	$,
	such that
	$
	\left|R(y) - R(x)\right| \le s(\dist(x,y))
	$ for all  $x,y\in \G$. 
\par Clearly, a uniformly semi-local operator is both strongly semi-local and semi-local at a controlled scale.
\end{itemize}
\end{definition}
\par A simple yet important observation is that if the $\mathsf{C}^{0}$-semigroup $\T_\epsilon$ is generated by a strongly semi-local operator $\mathcal{L}$, then the semigroup is $k$-regular for some $k\ge 1$ if and only if it is $\infty$-regular.
\par A semi-local map $F$ (not necessarily linear) on a subset $D\subseteq \Dom(F)$ is defined in the same fashion.
\begin{lemma}[Lipschitz tapering]\label{lem:Lip-taper}
	Suppose $\mathcal{L}$ is a semi-local operator at a controlled scale. Let $x\in\G$ and $M>0$ be fixed. Then, for sufficiently large $\Uplambda>0$ and 
	\begin{equation*}
	\zeta > M + 2\Uplambda + s(\Uplambda),
	\end{equation*}
the following holds. For any $f\in\Lip_1(\G)$ with $f(x)\le M$, there exists
$
	\widehat{f}\in\fs\cap\Lip_1(\G)
$
	such that
	\begin{equation*}
	\widehat{f}\restr_{B_{2\Uplambda + s(\Uplambda)}(x)} = f\restr_{B_{2\Uplambda + s(\Uplambda)}(x)}, \qquad \left(\mathcal{L} \widehat{f}\,\right) \restr_{B_{\Uplambda}(x)} = \left(\mathcal{L}f\right) \restr_{B_{\Uplambda}(x)},
	\end{equation*}
	and
	$
	\widehat{f}\equiv0
$ on 
$
	\G\smallsetminus B_{2\zeta}(x)
$.
\end{lemma}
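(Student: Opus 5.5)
The plan is to build $\widehat f$ by truncating $f$ radially with a $1$-Lipschitz cutoff in the ``height'' variable, in the style of McShane-type constructions. Set $r:=2\Uplambda+s(\Uplambda)$ and consider
\begin{equation*}
\widehat f(z) := \max\Bigl\{\min\bigl\{ f(z),\; 2\zeta - \dist(x,z)\bigr\},\; -\bigl(2\zeta-\dist(x,z)\bigr)\Bigr\}\quad\text{on } B_{2\zeta}(x),\qquad \widehat f:=0 \text{ outside}.
\end{equation*}
Equivalently, $\widehat f$ is $f$ clamped into the band $[-h,h]$ with $h(z):=\bigl(2\zeta-\dist(x,z)\bigr)^+$. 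Since $h$ is $1$-Lipschitz and clamping a $1$-Lipschitz function between $-h$ and $h$, both $1$-Lipschitz with $-h\le h$, preserves the $1$-Lipschitz property, we get $\widehat f\in\Lip_1(\G)$. Moreover $\widehat f\equiv0$ off $B_{2\zeta}(x)$, and by $\dist$-local finiteness $\widehat f\in\fs$.

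Next I check the agreement with $f$ on $B_r(x)$. For $z\in B_r(x)$ we have $|f(z)|\le |f(x)|+r$ and $h(z)\ge 2\zeta-r$. The hypothesis $\zeta>M+r$ then gives $2\zeta - r> M+r \ge f(x)+r$. The upper clamp is therefore inactive. For the lower clamp I need $-f(z)\le h(z)$, i.e. control of $-f(x)$. Only $f(x)\le M$ is given, so this is where ``sufficiently large'' enters. If $f(x)$ is very negative, I first replace $f$ by $f - f(x)$ plus the constant bookkeeping, or more simply use a one-sided clamp centered at $f(x)$: clamp $f$ into $[f(x)-h, f(x)+h]$ and then subtract nothing. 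This choice does not vanish at infinity unless $f(x)=0$, so instead I take the band $[\min(f(x),0)-h,\ \max(f(x),0)+h]$ intersected with the requirement of vanishing. I expect this normalization to be the main obstacle: reconciling the one-sided bound $f(x)\le M$ with the need for $\widehat f=0$ far away. The honest route is to note $|f(x)|$ must effectively be bounded, presumably the intended reading being $|f(x)|\le M$. With that reading, $2\zeta-r>|f(x)|+r\ge |f(z)|$ on $B_r(x)$, so both clamps are inactive and $\widehat f=f$ there.

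Finally, I transfer the equality to $\mathcal L$ using semi-locality at a controlled scale. For $y\in B_{\Uplambda}(x)$, $R(y)\le R(x)+s(\dist(x,y))\le R(x)+s(\Uplambda)$ because $s$ is non-decreasing. Choosing $\Uplambda\ge R(x)$ (this is the ``sufficiently large $\Uplambda$'') gives $B_{R(y)}(y)\subseteq B_{\Uplambda+R(x)+s(\Uplambda)}(x)\subseteq B_r(x)$. Then $\widehat f - f$ vanishes on $B_{R(y)}(y)$, and linearity together with $R$-semi-locality yields $\mathcal L\widehat f(y)=\mathcal L f(y)$. This uses that $\widehat f-f\in\Dom(\mathcal L)$, which I take to follow from $f,\widehat f\in\Dom(\mathcal L)$, implicitly assumed since the statement involves $\mathcal L f$ and $\mathcal L\widehat f$.
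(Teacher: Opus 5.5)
Your argument is correct and is essentially the paper's. Both clamp $f$ between $\mp$ a $1$-Lipschitz tent centred at $x$ that vanishes off $B_{2\zeta}(x)$; the paper's tent is $\upchi_{\zeta}=\zeta\wedge(2\zeta-\dist(x,\cdot))^+$, i.e.\ your $h$ additionally capped at $\zeta$, which makes no difference here. Both then take $\Uplambda> R(x)$ and use $R(y)\le R(x)+s(\Uplambda)$ on $B_{\Uplambda}(x)$ to pass the agreement on $B_{2\Uplambda+s(\Uplambda)}(x)$ to $\mathcal{L}$.

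Your worry about the lower clamp is justified. The paper's proof only checks the upper bound $f<\zeta$ on $B_{2\Uplambda+s(\Uplambda)}(x)$. With only $f(x)\le M$ the statement actually fails: on $\mathbb{Z}$, take $f\equiv -K$ with $K$ much larger than $2\zeta$. So the hypothesis should read $|f(x)|\le M$. This is harmless, since every later use of the lemma has $f(x)=0$. You can delete the exploratory alternatives in your middle paragraph and simply state this corrected hypothesis.
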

\begin{proof}[\footnotesize\textbf{Proof}]
	Pick $\Uplambda > R(x)$ and pick $\zeta$ so that
\begin{equation*}
		\zeta > M + 2\Uplambda + s(\Uplambda).
\end{equation*} 
\par Take the cut-off function
	\begin{equation*}
	\upchi_{_{\zeta}}(z)
	:=
	\Big(\zeta\wedge\big(2\zeta-\dist(x,z)\big)\Big) \vee 0.
	\end{equation*}
It is easy to check that $\upchi_{_{\zeta}}$ is $1$-Lipschitz and equals $\zeta$ on $B_{\zeta}(x)$. 
\par We deduce that $\upchi_{_{\zeta}}\wedge f$ coincides with $f$ on $B_{2\Uplambda + s(\Uplambda)}(x)$. Indeed, for $z\in B_{2\Uplambda + s(\Uplambda)}(x)$, one easily obtains
	\begin{equation*}
	f(z)
	\leq
	f(x)+\dist(x,z)
	=
	M + 2\Uplambda + s(\Uplambda)
	< \zeta.
	\end{equation*}
We set
	$
	\widehat{f}
	:=
	\left(-\upchi_{_{\zeta}} \vee f\right)\wedge \upchi_{_{\zeta}}
	$,
	which is clearly $1$-Lipschitz and satisfies
	\begin{equation}\label{eq:f-coincide}
		\widehat{f}\restr_{B_{2\Uplambda + s(\Uplambda)}(x)}
		=
		f\restr_{B_{2\Uplambda+ s(\Uplambda)}(x)}.
	\end{equation}
\par The controlled semi-locality of $\mathcal{L}$ implies that
\begin{equation*}
R(y) \le R(x) + s(\Uplambda)\le \Uplambda + s(\Uplambda), \quad \forall y \in B_{\Uplambda}(x);
\end{equation*}
thus from \hyperref[eq:f-coincide]{\eqref{eq:f-coincide}}, we deduce that for any such $y$, $\widehat{f}$ and $f$ coincide on 
$B_{R(y)}(y)$ and therefore, 	
	$
		\mathcal{L}\widehat{f}
		=
		\mathcal{L}f
	$
	on 
	$
	B_{\Uplambda}(x)
	$.
\end{proof}
\begin{definition}[Divergence-type operators]\label{defn:div-type}
	$\mathcal{A}$ is said to be \emph{weakly of divergence type} with respect to a measure $\m$ if
		\begin{equation*}
		\int_\G \mathcal{A}f\,d\m=0,
		\qquad
		\forall f\in\mathcal{C}_{\sf fs}(\G)\cap\mathsf{Dom}(\mathcal{A}).
		\end{equation*}
In particular, if $\m$ is an invariant measure for a $\mathsf{C}^0$-semigroup $\mathrm{T}_t$ generated by $\mathcal{A}$, then $\mathcal{A}$ is a divergence-type operator with respect to $\m$.	
\par If $\mathcal{A}$ is weakly of divergence type with respect to the counting measure $\m\equiv1$, we simply call $\mathcal{A}$ \emph{weakly of divergence type}.
\end{definition}
\begin{definition}[Comparable to multiplication]\label{defn:rough-comp-mult}
	We say that an operator $\mathcal{A}$ is \emph{comparable to multiplication} if there exists a non-negative function $q(x)$ such that
	\begin{equation*}
	\left|\mathcal{A}f(x)\right|
	\leq
	q(x)\|f\|_{\infty},
	\qquad
	\forall f\in\Dom(\mathcal{A}) \cap \cb.
	\end{equation*}
\end{definition}
\begin{definition}[Partial order]\label{def:order}
	Suppose $\mathcal{A}$ and $\mathcal{B}$ are two operators. We write
	\(
	\mathcal{A}\geq\mathcal{B}
	\)
	whenever
	\(
	\mathcal{A}(f)\geq\mathcal{B}(f)
	\)
	for all non-negative functions $f$ satisfying
	\(
	f\in\mathsf{Dom}(\mathcal{A})\cap\mathsf{Dom}(\mathcal{B}).
	\)
\end{definition}
\begin{remark}
	\textsl{The more or less standard terminology is that when $\mathcal{A} \ge 0$, one calls it a positive operator however, one can distinguish between non-negativity preserving operators, $\mathcal A \ge 0$, and positivity improving ones that are defined via
	\[
	\mathcal{A} f > 0, \quad \forall 0 \le f\in{\Dom(\mathcal{A})} \; \text{and}\; f \not\equiv 0;
	\]
but, in these notes, we will not need the latter more refined concepts.}
\end{remark}
\subsubsection{\small \textsf{\textbf{Conical-supremum extension}}}\label{subsubsec:conical-sup}
\begin{definition}[Conical supremum extension of non-negative operators]
	Suppose $\mathcal{A}$ is a positive operator and $\mathsf{S}$ is a cone of functions (see~\hyperref[subsubsec:imp-func-spaces]{Section~\ref{subsubsec:imp-func-spaces}}). The conical-sup extension $\widetilde{^\mathsf{S}\mathcal{A}}$ of $\mathcal{A}$ with respect to $\mathsf{S}$ is defined by
	\begin{equation*}
	\widetilde{^\mathsf{S}\mathcal{A}}(f)(x)
	:=
	\sup_{\substack{g\in \mathsf{Dom}(\mathcal{A})\cap \mathsf{S}\\ g\leq f}}
	\mathcal{A}g(x),
	\end{equation*}
	with values in the extended real numbers. \emph{The domain of $\widetilde{^\mathsf{S}\mathcal{A}}$ consists of those functions for which the right-hand side is finite for all $x\in\G$.} The map $\widetilde{^\mathsf{S}\mathcal{A}}$ is an extension of $\mathcal{A}\restr_{\mathsf{Dom}(\mathcal A) \cap \mathsf{S}}$.
	\par \emph{Note that the domains of these extensions are conical but not necessarily linear, and the conical-sup extensions are not necessarily linear.}
\end{definition}
\par It is easily verified that
$
\widetilde{^\mathsf{S}\mathcal{A}}(\lambda f)
=
\lambda\,\widetilde{^\mathsf{S}\mathcal{A}}(f)
$, for all $\lambda \ge 0$; also the conical-sup extension is positive and indeed monotone in the sense that
$
	f_1\leq f_2
$
 implies 
$
	\widetilde{^\mathsf{S}\mathcal{A}}\left(f_1\right)
	\leq
	\widetilde{^\mathsf{S}\mathcal{A}}\left(f_2\right)
$.

\par If $\mathsf{S}$ is a linear space, then
	\begin{equation*}
	\widetilde{^\mathsf{S}\mathcal{A}}\left(f_1+f_2\right)
	\geq
	\widetilde{^\mathsf{S}\mathcal{A}}\left(f_1\right)
	+
	\widetilde{^\mathsf{S}\mathcal{A}}\left(f_2\right);
	\end{equation*}
	in particular, for $f_1,f_2\geq0$,
	\begin{equation*}
	f_1+f_2\in
	\mathsf{Dom}\left(\widetilde{^\mathsf{S}\mathcal{A}}\right)
	\quad \Longrightarrow \quad
	f_1,f_2\in
	\mathsf{Dom}\left(\widetilde{^\mathsf{S}\mathcal{A}}\right).
	\end{equation*}
	When
	\begin{equation*}
	\mathsf{S}\cap\mathsf{Dom}(\mathcal A)\subseteq\fs,
	\qquad\text{or}\qquad
	\mathsf{S}\cap\mathsf{Dom}(\mathcal A)\subseteq\mathsf{C}_{\sf b},
	\end{equation*}
	we have
	\begin{equation*}
	f_1+f_2\in
	\mathsf{Dom}\left(\widetilde{^\mathsf{S}\mathcal{A}}\right) \quad 
	\iff \quad 
	f_1,f_2\in
	\mathsf{Dom}\left(\widetilde{^\mathsf{S}\mathcal{A}}\right),
	\end{equation*}
	and
	\begin{equation*}
	\widetilde{^\mathsf{S}\mathcal{A}}(f_1+f_2)
	=
	\widetilde{^\mathsf{S}\mathcal{A}}(f_1)
	+
	\widetilde{^\mathsf{S}\mathcal{A}}(f_2),
	\qquad
	\text{for }f_1,f_2\geq0.
	\end{equation*}
\begin{definition}[Natural cone]\label{defn:natural-cone}
	Let $\mathcal A$ be a given operator. In view of the useful properties it provides, we say that a cone $\mathsf{S}$ is \emph{natural} with respect to $\mathsf{Dom}(\mathcal A)$ whenever
	\begin{equation*}
	\cb^{\geq0}
	\subseteq
	\mathsf{S}\cap\mathsf{Dom}(\mathcal A)
	\subseteq
	\cb.
	\end{equation*}
	\emph{When the domain of the operator clear from the context, we simply call $\mathsf{S}$ a natural cone.}
\end{definition}
\par As we will see later, natural cones behave much like $\mathsf{S}=\cb$.
\par A straightforward observation is that if $\T$ is a positive integral operator with kernel $k(x,y)$ and $\mathsf{S}$ is a natural cone with respect to $\mathsf{Dom}(\T)$, then for any $f\geq0$,
$
\widetilde{^\mathsf{S}\T}(f)(x)<\infty
$
implies 
\begin{equation}\label{eq:int-ext}
\widetilde{^\mathsf{S}\T}(f)(x)
=
\int k(x,dy)f(y).
\end{equation}
In other words, the integral representation extends to $f$.
\par This follows from the fact that, by definition, $f\geq0$ can be approximated pointwise by an increasing sequence in
$
\mathsf{S}\cap\mathsf{Dom}(\T)
$,
and hence applying the monotone convergence theorem yields \hyperref[eq:int-ext]{\eqref{eq:int-ext}}. 
\subsubsection{\small \textsf{\textbf{Integrated comparison}}}\label{subsubsec:consup-comp}
\begin{definition}[Positive semigroups with bounded below generators]\label{defn:bounded-below}
	We say the generator $\mathcal{L}$ of a positive $\mathsf{C}^0$-semigroup $\T_t:\B\to\B$ is bounded below if there exists a function $q \in \R^\G$ that generates a multiplication semigroup $e^{tq}$ on $\B$ (need not be strongly continuous) such that $\Lo \ge q$ (see \hyperref[def:order]{Definition~\ref{def:order}}).
\end{definition}
\par If a positive smeigorup $\T_t$ has a bounded below generator, then
\begin{equation*}
\T_{t_1}
\geq
e^{t_0q}\,\T_{t_1-t_0},
\qquad
0\leq t_0\leq t_1.
\end{equation*}
This follows from the positivity of $\T_t$ and
\begin{equation*}
\nicefrac{d}{dt}\,\mathrm{P}_t
=
e^{-tq}\left(\mathcal{L}-q\right)\T_t,
\qquad
\text{where }
\mathrm{P}_t:=e^{-tq}\,\T_t.
\end{equation*}
\par If furtheremore,
\begin{equation*}
\widetilde{^\mathsf{S}\T}_{t_0}(f)(x_0)<\infty,
\qquad
\text{for some }t_0>0\text{ and }x_0\in\G.
\end{equation*}
 then, by the argument above, for any
\begin{equation*}
g\in\B\cap\mathsf{S},
\qquad
g\leq f,
\end{equation*}
and for $t\leq t_0$, we have
\begin{equation*}
\widetilde{^\mathsf{S}\T}_{t_0}f(x_0)
\geq
\T_{t_0}g(x_0)
\geq
e^{(t_0-t)q(x_0)}
\T_tg(x_0).
\end{equation*}
\par Upon taking the supremum over $g$, we obtain
\begin{equation*}
\widetilde{^\mathsf{S}\T}_{t}f(x_0)
\leq
e^{-(t_0-t)q(x_0)}
\widetilde{^\mathsf{S}\T}_{t_0}f(x_0).
\end{equation*}
In particular, one must have
\begin{equation*}
\sup_{t\in[0,t_0]}
\widetilde{^\mathsf{S}\T}_t f(x_0)
<\infty.
\end{equation*}
\subsubsection{\small \textsf{\textbf{Nonlinear mappings with uniform asymptotics along a subset}}}\label{subsubsec:unif-asymp}
\par Let $\B_1$ be a Banach space and $\B_2$ a Banach space of functions on $X$. Suppose $U\subseteq \B_1$ is open and $\mathsf A\subseteq \B_2$. Let 
\begin{equation*}
F:U\to \mathrm{Aut}(\B_2,\B_2)
\end{equation*}
be a continuous nonlinear mapping. Here, $\mathrm{Aut}(\B_2,\B_2)$ is the Banach space of bounded (continuous) automorphisms equipped with the operator norm. 
\begin{definition}
	We say that $F$ has \emph{uniform asymptotics of order $n+1$ at $a\in U$} along $\mathsf A$, if there exist operators
	\begin{equation*}
	L^a_i:\B_1^i\to \mathrm{Aut}(\B_2,\B_2),
	\end{equation*}
	such that
	\begin{equation*}
	\mathcal{R}_{a,h}
	:=
	F(a+h)
	-
	\Big(
	F(a)+L^a_1h+L^a_2(h^2)+\cdots+L^a_n(h^n)
	\Big),
	\end{equation*}
	satisfies
	\begin{equation*}
	\mathcal{R}_{a,h}f(z)
	=
	\smallO(\|h\|^n),
	\qquad
	\forall f\in \mathsf A,
	\quad
	\forall z\in X,
	\end{equation*}
	as $h\to0$, and furthermore \emph{the error is uniform in $f$} (notice that the error is not necessarily uniform in $z$).
\end{definition}
\par For example, a sufficient condition for $F$ to have uniform asymptotics of order $n+1$ at $a$ is that $F$ be Fr\'echet $\mathsf{C}^{n-1}$ and $n$-times Fr\'echet differentiable at $a$; see~\cite[Theorem 5.6.3]{Cartan}. Notice that, when $n>2$, there are cases in which $F$ has such uniform asymptotics without being $n$-times differentiable; see~\cite[Section 5.2]{Cole}.
\begin{remark}\label{rem:uniform-diff}
	\par One immediate observation is that, when $\B_1=\R$, $X$ is a metric space, and
	$
	\Lip_1(X)\subset\B_2
	$,
	the following holds. Suppose $F(t)$ is differentiable at $0$, $F'(0)$ is continuous with respect to the point-open topology, and $F(t)$ has uniform second-order asymptotics at $t = 0$ along $\Lip_1(X)$. If
	\begin{equation*}
	\Lip_1(X)\ni f_\epsilon
	\xrightarrow[\text{pointwise}]{} f_0,
	\end{equation*}
	then
	\begin{equation*}
	\lim_{\epsilon\to0}
	\frac{1}{\epsilon}
	\big(F(\epsilon)-F(0)\big)f_\epsilon(z)
	=
	F'(0)f_0(z).
	\end{equation*}
Recall that continuity with respect to the point-open topology implies continuity with respect to pointwise convergence of functions in the domain.
\end{remark}
\subsection{Semigroup random walks}\label{subsec:rw}
\hfill\\
\dunderline{1pt}{\raisebox{1pt}{\small\textbf{\textsf{Convention:}}}} \textsl{In the sequel, we assume that the domains of the operators encountered contain $\fs$, unless otherwise stated or implied.}
\par Let us introduce the following class of random walks represented by $\mathsf{C}^0$-semigroups of operators.
\begin{definition}[Semigroup random walk]
	\hfill
	\begin{itemize}
	\item \textbf{\small Forward semigroup walk:} Suppose $\mathrm{T}_t:\B\to\B$ is a $\mathsf{C}^0$-semigroup of operators such that, for each $z$, $\mathrm{T}_t(\delta_z)$ is a non-negative probability measure with finite first moment (i.e., finite-support probability densities are mapped to probability densities with finite first moments). Then we call the random walk given by
	$
	\upmu_z^\epsilon(y)=\mathrm{T}_\epsilon(\delta_z)(y)
	$,
	a \emph{forward semigroup walk}. 
\item \textbf{\small Backward semigroup walk:}
	There is a dual picture as well. If the mass preservation
	\begin{align}\label{eq:mass-preserv}
	\sum_{y\in\G}\mathrm{T}_\epsilon(\delta_y)\equiv 1,\qquad \forall\,\epsilon,
	\end{align}
	holds, then
$
	\upmu_z^\epsilon(y)=\mathrm{T}_\epsilon(\delta_y)(z)
$
	also defines a random walk, provided that it has finite first moments. We call the latter a \emph{backward semigroup walk}.
	\end{itemize}
\end{definition}
\par When $\mathrm{T}_\epsilon$ is roughly symmetric, the forward and backward semigroup walks coincide. 
\par An important class within the category of semigroup walks consists of integral semigroup walks, i.e., those given by transition kernels; see \hyperref[subsubsec:integral-smgrp]{Section~\ref{subsubsec:integral-smgrp}}. This class of random walks contains Markovian and Feller walks. 
\subsubsection{\small \textsf{\textbf{Formal stochastic completeness}}}\label{subsubsec:stoch-complete}
\begin{definition}[Formal stochastic completeness]\label{defn:formal-stoch-compl}
	Suppose $\B$ is a Banach space of functions containing constant functions. A semigroup $\T_t: \B \to \B$ is called formally stochastically complete if 
$
	\T_t(1) = 1
$
\end{definition}
\begin{remark}
	\textsl{By the standard compact-space theory, when $\G$ is a finite graph, a random walk defined on $\mathsf{C}_0(\G)=\R^\G$ is Feller and $\T_t(1)=1$ if and only if it is Markovian. See, for example, \cite[Proposition 2.3.2]{ACLR}.} The condition $\T_t(1)=1$ is called stochastic completeness.
\end{remark}
\begin{remark}
	The mass preservation property \hyperref[eq:mass-preserv]{\eqref{eq:mass-preserv}} is closely related to the stochastic completeness of the defining semigroup; i.e.,
	$
	\T_\eps(1) = 1
	$.
	\par For example, in the case of heat flows, stochastic completeness follows from the property that Ricci curvature is bounded below; see \cite[Theorem 4.11]{MW}. In our setting, if $\T_\eps$ is positive and continuous w.r.t. the point-open topology, then the mass preservation and stochastic completeness are equivalent. The same also holds if $\T_\eps$ is a positive integral semigroup.
\end{remark}
\par \emph{For a treatment of Markovian semigroups as gradient flows of Dirichlet forms, we refer the reader to \cite{KLW}. In this paper, our approach is more general and works directly with the semigroup rather than viewing the semigroup as a gradient flow.}
\subsubsection{\small \textsf{\textbf{The infinitesimal generator}}}\label{subsubsec:inf-gen}
\par Suppose $\mathcal{U}$ is the generator of the semigroup $\T_t$. If $\T_\eps(\delta_z)$ defines a forward semigroup walk and if $\B_0$ is a closed subspace of $\B$ (hence Banach), then $\mathcal{U}$ is weakly of divergence type with respect to the counting measure, i.e.,
\begin{equation*}
\sum_{z \in \G}\mathcal{U}f(z)=0,\qquad
\forall\,f\in\fs\cap\mathsf{Dom}(\mathcal{U});
\end{equation*}
see \hyperref[defn:div-type]{Definition~\ref{defn:div-type}}. Consequently, if, furthermore, $\T_\varepsilon(\delta_z)$ is a $\mathsf{C}^{k+1}$ walk, then
$
\mathcal{U}^i
$, $1\le i \le k$,
are also weakly of divergence type with respect to the counting measure. In particular, for an admissible forward semigroup walk, $\Lo$ is weakly of divergence type.
\par For a backward semigroup walk, $\upmu_z^\eps(w) = \T_\eps(\delta_w)(z)$, by the formal stochastic completeness, we deduce
$
\Lo 1 = 0
$;
i.e., $\Lo$ is translation invariant. 
\section{Ollivier--Ricci curvature of semigroup walks}\label{sec:ct-olric}
\subsection{Properties of $\mathbf{Ric}_{\scalebox{0.7}{$\pmb{\varepsilon}$}}$}\label{sec:dt-olric}
Recall that, for a one-parameter family of random walks $\upmu^\varepsilon_z$, the corresponding one-parameter family of coarse Ollivier--Ricci curvatures is given by
\begin{equation*}
\Ric_\varepsilon(x,y)
:=
1-\frac{\Was_1\left(\upmu^\varepsilon_x,\upmu^\varepsilon_y\right)}
{\dist(x,y)}.
\end{equation*}
\par The following properties follow from the, by now standard, theory of optimal transportation; see~\cite{Vil}.
\begin{itemize}
	\item (Restriction of Kantorovich-Rubinstein formulation) For any two probability measures $\upmu$ and $\upnu$ with finite first moments, one has
	\begin{align*}
		\Was_1\left(\upmu,\upnu\right)
		&=
		\sup_{f\in\Lip(1)\cap\mathcal{C}_{\sf fs}(\G)}
		\sum_{z\in\G}
		f(z)\left(\upmu(z)-\upnu(z)\right)\\
		&= \sup_{f\in\Lip(1)\cap\mathcal{C}_{\sf b}(\G)}
		\sum_{z\in\G}
		f(z)\left(\upmu(z)-\upnu(z)\right).
	\end{align*}
	\item  (Localization of Kantorovich-Rubinstein formulation)
		\begin{equation*}
		\Was_1\left(\upmu_x^\varepsilon,\upmu_y^\varepsilon\right)
		=
		\sup_{f\in\Lip_1\left(\Upomega^\varepsilon_{xy}\right)}
		\sum_{z\in\Upomega^\varepsilon_{xy}}
		f(z)
		\left(
		\upmu_x^\varepsilon(z)-\upmu_y^\varepsilon(z)
		\right),
	\end{equation*}
	where 
	\begin{equation*}
	\Upomega^\varepsilon_{xy} = \supp(\upmu_x^\eps) \cup \supp(\upmu_y^\eps).
\end{equation*}
	\item (Maximizer in the Kantorovich-Rubinstein formulation) There always exist a maximizer $f_{\max} \in \Lip_1(\G)$ for the Kantorovich-Rubinstein.
	\item (Localization of $\Ric_\varepsilon$) $\Ric_\varepsilon(x,y)$ coincides with the coarse Ollivier--Ricci curvature on the subgraph $\Upomega^\varepsilon_{xy}$.
	\item (Concavity of $\Ric_\varepsilon$)  For two random walks $\upmu_z$ and $\upnu_z$, we have
		\begin{equation*}
			\Ric^{(1-t)\upmu+t\upnu}(x,y)
			\geq
			(1-t)\Ric^{\upmu}(x,y)
			+
			t\Ric^{\upnu}(x,y).
		\end{equation*}
			In particular, if $\upeta_z^\varepsilon$ is a time-affine walk, then
		$\Ric^{\upeta}_\varepsilon(x,y)$ is a concave function of $\varepsilon$.
\end{itemize}
\subsection{Well-definedness of ${\mathbf{Ric}}$ for time-affine random walks}
\begin{lemma}\label{lem:wd-affine}
	$\Ric(x,y)$ is well-defined for all time-affine random walks of the form
	\begin{equation*}
	\upmu_z^\varepsilon(y) = \updelta_z(y) + \varepsilon \upmu_z(y),
	\end{equation*}
	where $\upmu_z$ is a zero-mass signed measure with finite first moment.
\end{lemma}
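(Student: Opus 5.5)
The plan is to show that $\varepsilon\mapsto \Was_1(\upmu_x^\varepsilon,\upmu_y^\varepsilon)$ is a convex function on $[0,1]$; the right derivative at $\varepsilon=0$ then exists by elementary convex analysis. Existence of the right derivative is all we need, allowing the value $-\infty$ in general. Finiteness will come from an explicit Lipschitz bound.

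\textbf{Step 1 (dual formula).} By the Kantorovich--Rubinstein formula recalled in \hyperref[sec:dt-olric]{Section~\ref{sec:dt-olric}}, restricted to $\Lip(1)\cap\cb$, for every $\varepsilon\in[0,1]$ with $\upmu^\varepsilon_x,\upmu^\varepsilon_y$ probability measures we have
\begin{equation*}
\Was_1\left(\upmu_x^\varepsilon,\upmu_y^\varepsilon\right)
=\sup_{f\in\Lip(1)\cap\cb}\Big( f(x)-f(y)+\varepsilon\sum_{z\in\G} f(z)\big(\upmu_x(z)-\upmu_y(z)\big)\Big).
\end{equation*}
Each sum converges absolutely. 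Indeed, $\upmu_x,\upmu_y$ have finite first moments, so they are finite signed measures, and $f$ is bounded.

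\textbf{Step 2 (convexity).} For each fixed $f$, the expression inside the supremum is affine in $\varepsilon$. A pointwise supremum of affine functions is convex, so $\varepsilon\mapsto \Was_1(\upmu_x^\varepsilon,\upmu_y^\varepsilon)$ is convex on $[0,1]$. This is just the concavity of $\Ric_\varepsilon$ for time-affine walks noted in \hyperref[sec:dt-olric]{Section~\ref{sec:dt-olric}}. It follows that the difference quotient
\begin{equation*}
\varepsilon\mapsto \varepsilon^{-1}\big(\Was_1(\upmu_x^\varepsilon,\upmu_y^\varepsilon)-\dist(x,y)\big)
\end{equation*}
is non-decreasing in $\varepsilon$. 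Hence its limit as $\varepsilon\downarrow0$ exists in $[-\infty,\infty)$, and this limit is the right derivative.

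\textbf{Step 3 (finiteness).} To rule out $-\infty$, bound the difference quotient from below. Use the triangle inequality for $\Was_1$ with the intermediate points $\delta_x$ and $\delta_y$:
\begin{equation*}
\Was_1(\upmu_x^\varepsilon,\upmu_y^\varepsilon)\ge \dist(x,y)-\Was_1(\upmu_x^\varepsilon,\delta_x)-\Was_1(\upmu_y^\varepsilon,\delta_y).
\end{equation*}
Since the only coupling with a Dirac mass is the product coupling,
\begin{equation*}
\Was_1(\upmu_x^\varepsilon,\delta_x)=\sum_{z}\dist(x,z)\,\upmu^\varepsilon_x(z)=\varepsilon\sum_z \dist(x,z)\,\upmu_x(z).
\end{equation*}
This is finite by the finite-first-moment hypothesis, and it uses $\dist(x,x)=0$. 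The same holds at $y$. Therefore the difference quotient is bounded below by $-\sum_z\dist(x,z)\upmu_x(z)-\sum_z\dist(y,z)\upmu_y(z)$. The limit is finite, and $\Ric(x,y)$ is well-defined.

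The only delicate point is Step 1: the duality must be legitimately applied at each $\varepsilon$ in the range considered. This requires $\upmu^\varepsilon_z$ to be genuine probability measures, which is part of the standing assumption that these form a random walk. If that holds only for small $\varepsilon$, convexity on $[0,\varepsilon_0]$ suffices.
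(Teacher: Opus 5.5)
Your proof is correct and follows essentially the same route as the paper. Convexity of $\varepsilon\mapsto\Was_1(\upmu_x^\varepsilon,\upmu_y^\varepsilon)$ (equivalently, concavity of $\Ric_\varepsilon$ with $\Ric_0=0$) makes the difference quotient monotone, and the triangle inequality through $\updelta_x,\updelta_y$ together with the first-moment bound gives the finite bound. The minor differences are that you derive the convexity directly from Kantorovich--Rubinstein duality as a supremum of affine functions instead of citing it, and you correctly note that only the one-sided bound (a lower bound on the difference quotient, i.e.\ an upper bound on $\varepsilon^{-1}\Ric_\varepsilon$) is needed.
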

\begin{proof}[\footnotesize \textbf{Proof}]
	From the hypotheses, we obtain
	\begin{align*}
		\Was_1\left( \updelta_z, \upmu_z^\varepsilon\right)
		&\leq \int \dist(z,w)\,d\upmu_z^\varepsilon(w)\\
		&= \int_{\G\smallsetminus{z}} \dist(z,y)\,d(\updelta_z+\varepsilon\upmu_z)\\
		&= \varepsilon\int_{\G} \dist(z,y)\,d\upmu_z\\
		&= O(\varepsilon).
	\end{align*}
	Since $\Was_1$ metrizes $\mathscr{P}_1(\G)$, we deduce
	\begin{align*}
		\left|\nicefrac{1}{\varepsilon}\,\Ric_\varepsilon(x,y)\right|
		&=
		\left|\nicefrac{1}{\varepsilon}
		\left(
		1-\frac{\Was_1\left( \upmu_x^\varepsilon,\upmu_y^\varepsilon\right)}
		{\dist(x,y)}
		\right)\right|\\
		&\le
		\nicefrac{1}{\varepsilon}
		\left(
		\frac{\Was_1\left(\updelta_x,\upmu_x^\varepsilon\right)}
		{\dist(x,y)}
		+\frac{\Was_1\left(\updelta_y,\upmu_y^\varepsilon\right)}
		{\dist(x,y)}
		\right).
	\end{align*}
Thus, we deduce $\nicefrac{1}{\varepsilon}\,\Ric_\varepsilon(x,y)$ is non-negative function that admits a finite lower bound as $\varepsilon\downarrow0$.	
\par Since $\Ric_\eps$ is concave in $\eps$, and $\Ric_0 \equiv 0$, we deduce
\[
\nicefrac{1}{\eps} \Ric_\eps \le \nicefrac{1}{\tau} \Ric_{\tau}, \quad \text{for all} \quad \tau \le \eps.
\]	
Thus 
$
\nicefrac{1}{\eps} \Ric_\eps
$
is a bounded non-increasing function of $\eps$ and we deduce
$
	\lim_{\varepsilon\downarrow0}
	\nicefrac{1}{\eps}\,\Ric_\varepsilon
$
exists. 
\end{proof}
\subsection{Well-definedness of $\mathbf{Ric}$ for admissible forward semigroup walks}
\label{subsubsec:olric-semigrp}
\par Suppose $\upmu_z^\epsilon$ is a semigroup walk. To such a semigroup walk $\upmu_z^\epsilon$, we associate its $\sf 1$-jet, $^1\upmu_z^{\varepsilon}$, given by
$
^1\upmu_z^{\varepsilon}
=
\delta_z+\varepsilon\mathcal{L}\delta_z
$.
 We have $^1\upmu_z^{\varepsilon}\in \mathscr{P}(\G)$ if and only if the generator $\Lo$ is weakly of divergence type (see \hyperref[defn:div-type]{Definition~\ref{defn:div-type}}), yet in general, the $1$-jet may have an infinite first moment; see \hyperref[subsubsec:inf-gen]{Section~\ref{subsubsec:inf-gen}}.
\subsubsection*{\small \textsf{\textbf{Proof of \hyperref[thm:main-1]{Main Theorem~\ref{thm:main-1}}}}}
\hfill
\par \textbf{\small Claim.}\label{claim:1} For all sufficiently small $\varepsilon>0$, all $z\in \G$ and for all natural cones $\mathsf{S}$ (see \hyperref[defn:natural-cone]{Definition~\ref{defn:natural-cone}}), we have
\begin{equation*}
	\dist(z,\cdot) \in \mathsf{Dom}\left(\widetilde{^\mathsf{S}\T}_{\varepsilon}\right).
\end{equation*}
\par \textbf{\small Proof of \hyperref[claim:1]{Claim}.}  
Let
\begin{equation*}
	g\in \mathsf{S}\cap \B, \quad \text{with} \quad 0 \le g \le \dist(z,\cdot),
\end{equation*}
this in particular, $g$ is bounded. For sufficiently large $N$, the function
$
f_N := N \wedge \dist(z,\cdot)
$
belongs to $\cb \cap \Lip_1(\G)$ and we have $g \le f_N$. 

Thus, by the positivity of $\T_\eps$, in the definition of
$
	\widetilde{^\mathsf{S}\T_{\varepsilon}}(\dist(z,\cdot))
$
we can take the supremum over
\begin{equation*}
	g \in \mathsf{S}\cap \B \cap \Lip(1), \quad \text{with} \quad 0 \le g \le \dist(z,\cdot).
\end{equation*}
\par Consider such a $g$. By the rough quasi-symmetry assumption, we can write
\begin{align*}
	\Big(\T_{\eps}(g) - g - \eps\mathcal{L}(g)\Big)(z) &= \sum_{w\in \G} \left( \T_{\eps} - \ident - \eps\mathcal{L} \right)\left(g\right)(w) \delta_z(w) \\
	& \le \Theta \left|\sum_{w\in \G} g(w) \left(\T_{\varepsilon}(\delta_z) - (\delta_z + \varepsilon\mathcal{L}\delta_z)\right)(w)\right|\\
	& \le \Uptheta \; \left|\Was_1\left(\T_\epsilon(\delta_z), \delta_z + \epsilon \mathcal{L}\delta_z\right)\right|.
\end{align*}
Hence, for sufficiently small $\epsilon >0$, we have
\begin{align*}
	\sup\limits_{g} \T_{\varepsilon}(g)(z) &\le \Uptheta \left|\Was_1\left(\T_{\varepsilon_0}(\delta_z), \delta_z  + \varepsilon \mathcal{L}\delta_z\right)\right| \\&\phantom{saj} + \sup\limits_{g} g(z) + \varepsilon\sup\limits_{g} \mathcal{L}(g)(z)\\
	&< \infty,
\end{align*}
since by the fact that $\mathcal L$ is comparable to multiplication, we have
\begin{equation*}
	\sup\limits_{g} \mathcal{L}(g)(z) \le q(z) g(z) < q(z) < \infty.
\end{equation*}
Hence, we have shown that
\begin{equation*}
	\widetilde{^\mathsf{S}\T_{\varepsilon}}\left(\dist(z,\cdot) \right)(z) < \infty.
\end{equation*}
\hfill \scalebox{0.8}{$\blacksquare$}
\begin{enumerate}
	\item[] \textsf{\textbf{\small Proof of the forward case:}} Now, to prove the case of admissible forward walks, we proceed as follows. Using the Kantorovich--Rubinstein formula and the uniform rough quasi-symmetry assumption (see \hyperref[defn:quasi-sym]{Definition~\ref{defn:quasi-sym}}), we have
	\begin{align}\label{eq:was-estimate}
		\Was_1\left(\upmu_z^{\varepsilon}, {^1}\upmu_z^{\varepsilon} \right) &= \sup_{f \in \Lip(1)\cap \fs} \sum_{w\in \G} f(w) \left(\upmu_z^{\varepsilon}(w) - {^1\upmu}_z^{\varepsilon}(w)\right) \notag
		\\&
		= \sup_{f \in \Lip(1)\cap \fs} \sum_{w\in \G} f(w) \left(\T_{\varepsilon}(\delta_z) - (\delta_z + \varepsilon\mathcal{L}\delta_z)\right)(w)
		\\&
		\le \Uptheta \sup_{f \in \Lip(1) \cap \fs}  \left(\T_{\varepsilon}\left(f\right) - f - {\varepsilon}\mathcal{L}\left(f\right)\right)(z)\notag,
	\end{align}
	for some $\Uptheta \ge 1$ determined by the uniform rough quasi-symmetry assumption. So we need to find an estimate on the order of smallness of the RHS of \hyperref[eq:was-estimate]{\eqref{eq:was-estimate}}. 
	\par Let $f \in \Lip(1) \cap \fs$ be a test function. Since $\Lo$ is strongly local, there exists $S(z)$ such that
	\begin{equation*}
	\supp \left( \Lo \delta_{z}  \right) \subset \B_{S(z)}(z).
	\end{equation*}	
\par By the translation invariance of the Kantorovich--Rubinstein formula, we may replace $f$ by $\widetilde{f}$ given by
	\begin{equation*}
	\widetilde{f} = f - f(z) + S(z)
	\end{equation*}
and instead assume that 
	\begin{equation*}
	f\in \Lip(1) \cap \mathsf{C}_{\sf b}, \quad \text{and} \quad f\left(z\right) = S\left(z\right).
	\end{equation*}
We note that in the RHS of \hyperref[eq:was-estimate]{\eqref{eq:was-estimate}}, is also translation invariant since $\T_\eps$ is assumed to be formally stochastically complete; see \hyperref[subsubsec:inf-gen]{Section~\ref{subsubsec:inf-gen}}. 
 \par Consequently, we have
	$
		f > 0
	$
	on
	$
	\B_{S(z)}\left(z\right)
	$.
	Also we have
	$
		{}^1\upmu_{z}^\epsilon = 0
	$ on 
	$
	\G \smallsetminus \B_{S(z)}\left(z\right)
	$.
\par By the non-negativity of the semigroup $\T_\varepsilon$, $\T_{\varepsilon}(\delta_{z})$ is non-negative everywhere. Hence, we deduce that
\begin{equation*}
	\T_{\varepsilon}(\delta_{z}) - \left(\delta_{z} + \varepsilon\mathcal{L}\delta_{z}\right)\ge 0, \quad \text{on} \quad \G \smallsetminus \B_{S(z)}\left(z\right).
\end{equation*}
In particular, these facts imply that we can replace the test functions $f$ by $f^+ \in \Lip(1)$ without changing the optimal value in \hyperref[eq:was-estimate]{\eqref{eq:was-estimate}}. This is because doing so does not decrease the right-hand side of \hyperref[eq:was-estimate]{\eqref{eq:was-estimate}}: the summation over $\B_{S(z)}\left(z\right)$ remains unchanged, while the summation over
$
\G \smallsetminus \B_{S(z)}\left(z\right)
$
does not decrease. Henceforth, we may also assume that $f \ge 0$.
\par By Taylor's theorem with the Lagrange form of the remainder (see the discussion in \hyperref[subsubsec:semigroups]{Section~\ref{subsubsec:semigroups}}), we have
\begin{equation*}
	\Big(\T_{\varepsilon}\left(f\right) - f - \varepsilon\mathcal{L}\left(f\right)\Big)(z) = \nicefrac{{\varepsilon}^2}{2} \; \mathcal{L}^2\T_{\tau(f)} \left(f\right)(z),
\end{equation*}
for some
$
\tau(f) \in (0,\eps)
$.
\par To bound the term
$
	\mathcal{L}^2\T_{\tau(f)} \left(f \right)(z)
$,
we argue as follows.
From
\begin{equation*}
	0\le f \le S(z) + \dist(z,\cdot),
\end{equation*}
we deduce that
\begin{equation*}
	0 \le \T_\tau\left(f\right) = \widetilde{^\mathsf{S}\T_\tau}\left(f\right) \le \widetilde{^\mathsf{S}\T_\tau}\left(f(z) + \dist(z,\cdot)\right),
\end{equation*}
where we have used the monotonicity of the conical supremum; see \hyperref[subsubsec:conical-sup]{Section~\ref{subsubsec:conical-sup}}.
\par Since $\mathcal{L} \ge q$, we obtain
\begin{align*}
	\widetilde{^\mathsf{S}\T_\tau}\left(S(z) + \dist(z,\cdot)\right)(x) &\le \widetilde{^\mathsf{S}\T_{\eps}}\left(S(z) + \dist(z,\cdot)\right)(x) e^{(\eps -\tau)q(x)}\\
	&\le \widetilde{^\mathsf{S}\T_{\eps}}\left(S(z) + \dist(z,\cdot)\right)(x) e^{q(x)};
\end{align*}
see \hyperref[subsubsec:consup-comp]{Section~\ref{subsubsec:consup-comp}} and \hyperref[defn:rough-comp-mult]{Definition~\ref{defn:rough-comp-mult}}.
\par By the semi-locality of $\mathcal{L}$, there exists a radius $a>0$ such that
\begin{equation*}
	\mathcal{L}^2\T_{\tau} \left(f \right)\left(z\right) = \mathcal{L}^2 \Big(\chi_{B_a\left(z\right)} \T_{\tau} \left(f \right) \Big)\left(z\right).
\end{equation*}
\par Since $\mathcal{L}$ is comparable to multiplication, we deduce
\begin{align}\label{eq:unif-bd-1}
	\left|\mathcal{L}^2\left(\chi_{B_a(z)}\T_{\tau} \left(f \right)(z)\right)\right| &\le C(a,z)\left\|\chi_{B_a(z)}\T_{\tau} \left(f \right)\right\|_{\sup}\notag \\
	&\le C(a,z)\left\|\chi_{B_a(z)}\; \widetilde{^\mathsf{S}\T_\tau}\left(S(z) + \dist(z,\cdot)\right)\right\|_{\sup}.
\end{align}
In particular,
\begin{align}\label{eq:unif-bd-2}
	&\left\|\chi_{B_a(z)}\; \widetilde{^\mathsf{S}\T_\tau}\left(S(z) + \dist(z,\cdot)\right)\right\|_{\sup} \notag \\
	&\le C(a,z) \left\|\chi_{B_a(z)}\; \widetilde{^\mathsf{S}\T_{\eps}}\left(S(z) + \dist(z,\cdot)\right)\right\|_{\sup}.
\end{align}
Upon combining \hyperref[eq:unif-bd-1]{\eqref{eq:unif-bd-1}} and \hyperref[eq:unif-bd-2]{\eqref{eq:unif-bd-2}}, we deduce the desired uniform bound.
\par Thus, we have shown that
\begin{equation}\label{eq:second-order}
	\Big(\T_{\varepsilon}\left(f\right) - f - t\mathcal{L}\left(f\right)\Big)(z) \le C \varepsilon^2,
\end{equation}
where the constant $C$ depends only on $z$ and the semi-locality scales of $\mathcal L$. Based on \hyperref[eq:was-estimate]{\eqref{eq:was-estimate}}, we obtain
\begin{equation*}
	\Was_1\left(\upmu_z^{\varepsilon}, {^1}\upmu_z^{\varepsilon} \right) \le \Theta C \epsilon^2.
\end{equation*}
\par Therefore, the preceding argument can be repeated, and the final constant $C$ appearing in \hyperref[eq:second-order]{\eqref{eq:second-order}} does not increase as $\eps\downarrow 0$. Hence, the $1$-jet approximation 
\begin{equation*}
	\Was_1\left(\upmu_z^\varepsilon,^1\upmu_z^\varepsilon\right)
	=
	\mathcal{O}\left(\varepsilon^2\right)
	\qquad\text{as}\qquad
	\varepsilon\downarrow0,
\end{equation*}	
follows for admissible forward semigroup walks. 
\bigskip
\item [] \textsf{\textbf{\small Proof of the backward case:}}
\hfill\newline
To prove the case of backward admissible walks, we proceed as follows. 
Using the Kantorovich--Rubinstein formula with finite support, we have
\begin{align*}
	\Was_1\left(\upmu_z^{\varepsilon_0}, {^1}\upmu_z^{\varepsilon_0} \right) &= \sup_{f \in \Lip(1) \cap \fs} \sum_{w\in \G} f(w) \left(\upmu_z^{\varepsilon_0}(w) - {^1\upmu}_z^{\varepsilon_0}(w)\right) \notag
	\\&
	= \sup_{f \in \Lip(1) \cap \fs} \sum_{w\in \G} f(w) \left(\T_{\varepsilon_0}(\delta_w) - (\delta_w + \varepsilon_0\mathcal{L}\delta_w)\right)(z)
	\\&= \sup_{f \in \Lip(1) \cap \fs}  \left(\T_{\varepsilon_0}\left(f\right) - f - {\varepsilon_0}\mathcal{L}\left(f\right)\right)(z) \;\notag
\end{align*}
\par By the translation invariance of the Kantorovich--Rubinstein formula, the left-hand side does not change when a constant is added to $f$. The same is true of the right-hand side, since $\Lo$ is translation invariant and $\T_\eps$ is formally stochastically complete; see \hyperref[defn:formal-stoch-compl]{Definition~\ref{defn:formal-stoch-compl}}. 
\par So we again need to find an estimate on the order of smallness in $\eps$ for
\begin{equation*}
\left(\T_{\varepsilon}\left(f\right) - f - {\varepsilon}\mathcal{L}\left(f\right)\right)(z)
\end{equation*}
when $f$ satisfies
$
f \in \Lip_1(\G)
$
and $f(z) = R_0$
and where we have
\begin{equation*}
	{}^1\upmu_{z}^\epsilon(w) = \delta_w + \eps \Lo \delta_w(z) = 0, \quad \text{for all} \quad w\in\G \smallsetminus \B_{R_0}\left(z\right),
\end{equation*}
by the uniform semi-locality at scale $R_0$. 
\par From here, obtaining the estimate then follows verbatim as in the case for the admissible forward semigroup walks. 
\end{enumerate}
\par Finally, to show that the Ollivier--Ricci curvature coincides with that of the corresponding $1$-jet walk, we note that, since $\Was_1$ metrizes $\mathscr{P}_1(\G)$ and since
$
{}^1\upmu_z^\varepsilon \in \mathscr{P}_1(\G)
$
, we have
$
	\Was_1\left(\upmu_z^\varepsilon,{}^1\upmu_z^\varepsilon \right)< \infty
$;
Thus, by the argument above, we have
\begin{equation*}
	\Was_1\left(\upmu_z^\varepsilon,{}^1\upmu_z^\varepsilon\right)
	=
	\mathcal{O}\left(\varepsilon^2\right)
	\qquad\text{as}\qquad
	\varepsilon\downarrow0.
\end{equation*}
Now
$
	\Ric(x,y) = {}^1\Ric(x,y)
$
follows by upon applying the triangle inequality. Recall ${}^1\Ric$ is well-defined according to \hyperref[lem:wd-affine]{Lemma~\ref{lem:wd-affine}}.
\newline\phantom{}\hfill\scalebox{1.2}{\qed}
\begin{example}\phantom{}\hspace{-6pt}\label{ex:1}
Suppose $\G$ is a locally finite graph and let $\dist$ denote the combinatorial distance. 
\par Suppose $\G$ is stochastically complete w.r.t. heat flow generated by the drift-less ($\mathdutchcal{g}\equiv 0$) discrete Laplacian $	\Delta_{\omega, 0,\m}$ which is of the form (see \hyperref[subsec:rudiments]{Section~\ref{subsec:rudiments}})
\begin{equation*}
	\Delta_{\omega, 0,\m} f (x):= \m(x)^{-1}\sum_{x \sim y} \left( f(y) - f(x)  \right) \omega_{xy}.
\end{equation*}
By the work in~\cite{KLW}, we know $\Delta_{\omega, 0,\m}$ generates a contractive Markovian semigroup  
$
	\T_t: \cb \to \cb
$.
Consequently,
\begin{equation*}
\upmu^\eps_z(w) := \T_\eps(\delta_w)(z) = p(\eps,x,dw)>0
\end{equation*}
is an admissible backward integral semigroup walk.
\par It is easy to see that
\begin{equation*}
\left| \Delta_{\omega,0, \m} f(x)\right| \le q(x)f(x) :=\frac{1}{\m(x)} \max \left\{\sum\limits_{y\sim x} \omega(x,y),\, \max_{y\sim x}\omega(x,y)  \right\}
\end{equation*}
where
\begin{equation*}
q(x) :=\frac{1}{\m(x)} \max \left\{\sum\limits_{y\sim x} \omega(x,y), \max_{y\sim x}\omega(x,y)  \right\}
\end{equation*}
thus by local finiteness, we deduce $\Delta_{\omega, 0 , \mathdutchcal{n}}$ is a uniformly semi-local operator that is comparable to multiplication. Its domain contains all functions in $\cb$ thus the smeigorup is clearly $\infty$-regular. 
\par To see that $\Delta_{\omega,0, \m}$ is comparable to multiplication in the sense of \hyperref[defn:rough-comp-mult]{Definition~\ref{defn:rough-comp-mult}}, let $|f| \le M$, then one clearly has
\[
\left|\Delta_{\omega,0, \m} f(x)  \right|\le \frac{2 \sum_{y\sim x} \omega(x,y)}{\m(x)} M.
\]
\par Therefore, upon applying \hyperref[thm:main-1]{Main Theorem~\ref{thm:main-1}}, we deduce that $\Ric$ is well-defined and coincides with ${}^1\Ric$. 
\end{example}
\section{Variational formulation for the Ollivier--Ricci curvature}\label{sec:var}
\subsection{Normal forms of general random walks}\label{subsec:normal-form}
It would be useful to obtain an operator-theoretic expression for a general class of random walks under minimal assumptions.
\subsubsection{\small \textsf{\textbf{Standard backward normal form}}}
Let $\upmu_z^\varepsilon$ be a random walk. One associates to it the (non-log-linear in $\varepsilon$) averaging operators
\begin{equation*}
\mathscr{A}^\varepsilon f(z) := \int_{\Upomega_{z}} f(w)\,d\upmu_z^\varepsilon(w);
\end{equation*}
thus,
$
\mathscr{A}^\varepsilon \delta_w(z) = \upmu_z^\varepsilon(w)
$.
One also associates the integrated random-walk Laplacians
$
\mathscr{L}^\varepsilon = \mathscr{A}^\varepsilon - \ident
$;
see~\cite{Ol}.
\par Therefore,
\begin{equation*}
\mathscr{L}^\varepsilon f(z)
= \sum_{w\in \G} \left(f(w)-f(z)\right)\upmu_z^\varepsilon(w)
= \sum_{w\in \G\smallsetminus\{z\}} \left(f(w)-f(z)\right)\upmu_z^\varepsilon(w),
\end{equation*}
is well-defined on a domain containing $\mathsf{C}_{\sf b}$ and $\Lip(\G)$, owing to the finite first-moment assumption on the random walks.
\par When $\upmu_z^\epsilon$ is vertex-wise differentiable at $\varepsilon=0$, one has the asymptotic development
\begin{align}\label{eq:gen-lap-diff}
	\mathscr{L}f(z)
	:= \lim_{\varepsilon \downarrow 0}
	\nicefrac{1}{\varepsilon}
	\left(\mathscr{A}^\varepsilon-\mathbb{I}\right)f(z)
	= \nicefrac{d}{d\varepsilon}\restr_{\varepsilon=0}
	\mathscr{L}^\varepsilon f(z)
	= \sum_{\G\smallsetminus{z}}
	\left(f(w)-f(z)\right)
	\;\nicefrac{d}{d\varepsilon}\restr_{\varepsilon=0}
	\upmu_z^\varepsilon(w),
\end{align}
which is well-defined for $f\in\fs$.
If, furthermore, $\upmu_z^\epsilon$ is $\mathsf{TR}^{2,1}$-regular, then one has the asymptotic development
\begin{equation*}
\upmu_z^\varepsilon(w)
= \delta_z
+ \epsilon\,\nicefrac{d}{d\varepsilon}\restr_{\varepsilon=0}
\upmu_z^\varepsilon
+ \mathcal{O}\left(\epsilon^2\right),
\end{equation*}
where the error term, multiplied by $\varepsilon^{-1}$, is uniformly summable. Thus, \hyperref[eq:gen-lap-diff]{\eqref{eq:gen-lap-diff}} holds on a domain containing $\mathsf{C}_{\sf b}$. \emph{Notice that $\mathsf{TR}_0^{2,1}$-regularity guarantees that the limit can be taken inside the summation when $f\in\mathsf{C}_b$.} In a similar fashion, if the walk is in $^\dist\mathsf{TR}_0^{2,1}$, then the domain of $\mathcal{L}$ will also include $\Lip_1(\G)$ (see~\hyperref[subsubsec:imp-func-spaces]{Section~\ref{subsubsec:imp-func-spaces}} for the definitions of these function spaces).
\par In particular, by the uniqueness of asymptotic developments~\cite[Theorem 5.5]{Cole}, we obtain
\begin{equation*}
\upmu_z^\varepsilon(w)
= \delta_w(z)
+ \varepsilon\,\mathscr{L}\delta_w(z)
+ \mathcal{O}\left(\epsilon^2\right).
\end{equation*}
We call this the \emph{normal backward form} of $\upmu_z^\varepsilon$, and we refer to $\mathscr{L}$ as the \emph{backward initial generator}.
\par \emph{Furthermore, $1\in {\sf Dom}(\mathscr{L})$ and $\mathscr{L}1=0$.}
Notice that, if $\mathscr{L}$ is continuous with respect to the point-open topology (the topology of pointwise convergence), then the latter condition is equivalent to the first-order term $\varepsilon\mathscr{L}\delta_w(z)$ being a zero-mass signed measure.
\par If
\begin{equation*}
\nicefrac{d}{d\varepsilon}\restr_{\varepsilon=0}
\upmu_z^\varepsilon(w)
=
\nicefrac{d}{d\varepsilon}\restr_{\varepsilon=0}
\upmu_w^\varepsilon(z),
\end{equation*}
holds for all $z,w\in\G$, then $\mathscr{L}$ is roughly symmetric with respect to the counting measure. In particular,
\begin{equation*}
\upmu_z^\varepsilon(w)
= \delta_z(w)
+ \varepsilon\,\mathscr{L}\delta_z(w)
+ \mathcal{O}(\epsilon^2).
\end{equation*}
\par For a semigroup walk, $\mathscr{L}$ is called the \emph{backward generator}. If the walk is $2$-regular and symmetric, the backward generator $\mathscr{L}$ and the infinitesimal generator of the semigroup coincide on
$
\fs \cap \Dom(\Lo)
$,
by the uniqueness of the asymptotic developments. See also the \hyperref[ex:1]{Example~\ref{ex:1}} below.
\begin{example}\phantom{}\hspace{-6pt}\label{ex:1}
	Suppose that $\Lo$ is an operator that is weakly in divergence form and that
	$
		\upmu_z^\eps = \delta_z + \eps \Lo \delta_z
	$
	defines a time-affine random walk. Taking $f = \delta_{z_0}$, we obtain
	\begin{equation*}
		\mathscr{A}^\varepsilon (\delta_{z_0})(z)
		:= \int_{\G} \delta_{z_0}(w)\,d\upmu_z^\varepsilon(w) = (\ident + \eps \Lo)\delta_z(z_0).
	\end{equation*}
	Consequently,
	\begin{equation}\label{eq:time-affine-form-f}
		\mathscr{L}^\varepsilon \delta_{z_0}(z)
		= \eps \Lo \delta_z(z_0),
		\quad \text{and} \quad
		\mathscr{L} \delta_{z_0}(z)
		= \Lo \delta_z(z_0).
	\end{equation}
\par Similarly, suppose $\Lo$ is an operator that is translation invriant and that
\begin{equation*}
	\upmu_z^\eps(w) = \left(\delta_w + \eps \Lo \delta_w\right)(z)
\end{equation*}
defines a time-affine random walk. Taking $f = \delta_{z_0}$, we obtain
\begin{equation*}
	\mathscr{A}^\varepsilon (\delta_{z_0})(z)
	:= \int_{\G} \delta_{z_0}(w)\,d\upmu_z^\varepsilon(w) = (\ident + \eps \Lo)\delta_{z_0}(z).
\end{equation*}
Consequently,
\begin{equation}\label{eq:time-affine-form-b}
	\mathscr{L}^\varepsilon \delta_{z_0}(z)
	= \eps \Lo \delta_{z_0}(z),
	\quad \text{and} \quad
	\mathscr{L} \delta_{z_0}(z)
	= \Lo \delta_{z_0}(z).
\end{equation}
\end{example}
\subsection{Variational formulation}\label{subsec:var-form}
\par An interesting variational (or limit-free) formulation of the Ollivier--Ricci curvature of time-affine lazy random walks of the (backward normal) form
\begin{equation*}
\upbeta_z^\eps(w) := \left(\ident + \eps \Delta\right)\delta_w(z),
\end{equation*}
was first observed in~\cite{MW}.
\par Here, we establish a variational formulation for a general class of random walks. The case of admissible semigroup walks will follow as a special case.
\par We first make a simple observation. For any semi-local at a controlled scale operator $\mathcal{A}$, a direct consequence of the Lipschitz tapering in \hyperref[lem:Lip-taper]{Lemma~\ref{lem:Lip-taper}} is that
\begin{equation*}
\inf_{\substack{ \\ f \in \Lip(1)\\ \nabla_{\vv{xy}} f = 1 \\ f(x)=0 }}
\nabla_{\vv{yx}} \mathcal{A}(f)
=
\inf_{\substack{ \\ f \in \Lip(1) \cap \fs\\ \nabla_{\vv{xy}} f = 1 \\ f(x)=0 }}
\nabla_{\vv{yx}} \mathcal{A}(f),
\end{equation*}
for all $x,y \in \G$. In fact, it suffices to take the infimum over functions $f$ satisfying
\begin{equation*}
\supp(f) \subset B_{2\zeta}(x), \quad \zeta = \Uplambda + s(\Uplambda),
\end{equation*}
where $\Uplambda$ is chosen so that
$
\Uplambda > R(x)
$.
\par Consequently, we can take the infimum over 
\begin{equation*}
f \in \Lip(1) \cap \cb,\quad \nabla_{\vv{xy}} f = 1, \quad f(x)=0.
\end{equation*}
\par In what follows, $\upmu_z^\varepsilon$ is a given random walk with averaging operator $\mathscr{A}^\epsilon$, integrated Laplacians $\mathscr{L}^\epsilon$, and backward initial generator $\mathscr{L}$; see \hyperref[subsec:normal-form]{Section~\ref{subsec:normal-form}} for details. 
\subsubsection*{\small \textsf{\textbf{Proof of \hyperref[thm:main-2]{Main Theorem~\ref{thm:main-2}}}}}
	\par By Kantorovich--Rubinstein duality, one has
	\begin{equation*}
		\Was_1(\upmu_x^\varepsilon,\upmu_y^\varepsilon)
		=
		\sup_{\substack{\ f \in \Lip(1)}}
		\sum f(z)\left(\upmu_x^\varepsilon(z)-\upmu_y^\varepsilon(z)\right);
	\end{equation*}
	hence,
	\begin{align*}
		\Was_1(\upmu_x^\varepsilon,\upmu_y^\varepsilon)
		&=
		\sup_{f \in \Lip(1)}
		\left[
		\left(f(x)-f(y)\right)
		+
		\Big(
		\mathscr{L}^\varepsilon(f)(x)
		-
		\mathscr{L}^\varepsilon(f)(y)
		\Big)
		\right] \\
		&=
		\dist(x,y)
		\sup_{f \in \Lip(1)}
		\Big(
		\nabla_{\vv{yx}} f
		+
		\nabla_{\vv{yx}}\mathscr{L}^\varepsilon(f)
		\Big);
	\end{align*}
	and, as a result,
	\begin{align*}
		\Ric_\varepsilon(x,y)
		&=
		1-\frac{\Was_1(\upmu_x^\varepsilon,\upmu_y^\varepsilon)}
		{\dist(x,y)}\\
		&=
		\inf_{f \in \Lip(1)}
		\Big(
		(1-\nabla_{\vv{xy}}f)
		+
		\nabla_{\vv{yx}}\mathscr{L}^\varepsilon(f)
		\Big).
	\end{align*}
\par Adding the constraint $\nabla_{\vv{xy}}f=1$, we obtain
	\begin{align}\label{eq:ric-eps-bound}
		\nicefrac{1}{\varepsilon}\Ric_\varepsilon
		&=
		\inf_{f \in \Lip(1)}
		\nicefrac{1}{\varepsilon}
		\Big(
		(1-\nabla_{\vv{xy}}f)
		+
		\nabla_{\vv{yx}}\mathscr{L}^\varepsilon(f)
		\Big)\\
		&\le
		\inf_{\substack{f \in \Lip(1)\ \nabla_{\vv{xy}}f=1}}
		\nicefrac{1}{\varepsilon}
		\nabla_{\vv{yx}}\mathscr{L}^\varepsilon(f)\notag\\
		&=
		\inf_{\substack{f \in \Lip(1)\ \nabla_{\vv{xy}}f=1}}
		\left(
		\nabla_{\vv{yx}}\mathscr{L}(f)+\smallO_\eps(1)
		\right)\notag\\
		&=
		\inf_{\substack{f \in \Lip(1)\ \nabla_{\vv{xy}}f=1}}
		\nabla_{\vv{yx}}\mathscr{L}(f)
		+
		\smallO_\eps(1).
		\notag
	\end{align}
	where the last equality follows from assumption~\hyperref[item:lff-1]{(a)}. Hence,
	\begin{equation*}
	\Ric^+(x,y)
	=
	\varlimsup_{\epsilon\to0}
	\nicefrac{1}{\varepsilon}\Ric_\varepsilon
	\le
	\inf_{\substack{f \in \Lip(1)\\ \nabla_{\vv{xy}}f=1}}
	\nabla_{\vv{yx}}\mathscr{L}(f).
	\end{equation*}
	\par To prove the second claim, suppose that $\mathcal{L}$ is semi-local and that $\Ric^+(x,y)$ is finite. For every $\epsilon>0$, suppose that \hyperref[eq:ric-eps-bound]{\eqref{eq:ric-eps-bound}} attains its minimum at $f_\varepsilon$. We note that existence of a minimizer is guaranteed by the Kantorovich--Rubinstein duality theorem; see, e.g.,~\cite[Theorem 5.10]{Vil}.
\par The problem is invariant under translations of $f$, so we may assume that $f_\varepsilon(x)=0$. By the argument at the beginning of this section, we may assume that all $f_\varepsilon$ are supported in $B_{4R(x)}(x)$.
\par By a standard diagonal argument, there exists a subsequence $\varepsilon_k$ such that $f_{\varepsilon_k}$ converges pointwise to some function $f_0$.
\par {\small \bf Claim.}
	$
	\nabla_{\vv{xy}}f_0=1.
	$
\par {\small \bf Proof of the claim.} Since the functions $f_{\varepsilon_k}$ are all $1$-Lipschitz, satisfy $f_{\varepsilon_k}(x)=0$, and satisfy
$
	\supp(f_{\varepsilon_k})\subset B_{4R(x)}(x)
$,
	we deduce that $\|f_{\varepsilon_k}\|_{\sup}$ is uniformly bounded.
	\par By the uniform asymptotics assumed in \hyperref[item:lff-1]{(a)} (see \hyperref[subsubsec:unif-asymp]{Section~\ref{subsubsec:unif-asymp}}), we then obtain
	\begin{equation*}
	\mathscr{L}^{\varepsilon_k}
	\left(f_{\varepsilon_k}\right)
	=
	\varepsilon_k\mathscr{L}f_{\varepsilon_k}
	+
	\mathcal{O}(\epsilon_k^2),
	\end{equation*}
	where the error term is uniform in $k$. In particular,
	\begin{equation*}
	\nicefrac{1}{\epsilon_k}
	\nabla_{\vv{yx}}
	\mathscr{L}^{\varepsilon_k}
	\left(f_{\varepsilon_k}\right)
	=
	\nabla_{\vv{yx}}
	\nicefrac{1}{\epsilon_k}
	\mathscr{L}^{\varepsilon_k}
	\left(f_{\epsilon_k}\right)
	\end{equation*}
	is bounded as $\epsilon_k\downarrow0$, and, by the point-open continuity of $\mathcal{L}$, converges to
	$
	\nabla_{\vv{yx}}\mathscr{L}(f_0)
	$;
	see \hyperref[rem:uniform-diff]{Remark~\ref{rem:uniform-diff}}.
	\par Consequently, in \hyperref[eq:ric-eps-bound]{\eqref{eq:ric-eps-bound}}, we must have
	\begin{equation*}
	1-\nabla_{\vv{xy}}f_0
	=
	\lim_{\epsilon_k\downarrow0}
	\left(
	1-\nabla_{\vv{xy}}f_{\epsilon_k}
	\right)
	=
	0,
	\end{equation*}
since otherwise $\Ric^+(x,y)$ would not be finite. \hfill \scalebox{0.8}{$\blacksquare$}
	\par Since $f_0$ is $1$-Lipschitz and $\nabla_{\vv{xy}}f_0=1$, we can use it as a test function. We therefore deduce
	\begin{align*}
		\varlimsup_{\epsilon\to0}
		\nicefrac{1}{\varepsilon}\Ric_\varepsilon
		\ge
		\lim_{k\to\infty}
		\nicefrac{1}{\varepsilon_k}
		\nabla_{\vv{yx}}
		\mathscr{L}^{\varepsilon_k}
		\left(f_{\epsilon_k}\right)
		=
		\nabla_{\vv{yx}}\mathscr{L}f_0
		\ge
		\inf_{\substack{f \in \Lip(1)\cap\fs\\
				\nabla_{\vv{xy}}f=1}}
		\nabla_{\vv{yx}}\mathscr{L}(f).
	\end{align*}
\hfill\scalebox{1.2}{\qed}
\subsubsection*{\small \textsf{\textbf{Proof of \hyperref[cor:main-2]{Main Corollary~\ref{cor:main-2}}}}}
\par Based on \hyperref[thm:main-1]{Main Theorem~\ref{thm:main-1}}, we only need to establish the variational formulation for ${}^1\Ric$.
\par According to \hyperref[eq:time-affine-form-f]{\eqref{eq:time-affine-form-f}} and \hyperref[eq:time-affine-form-b]{\eqref{eq:time-affine-form-b}}, $\mathscr L^\eps$ is linear in $\eps$ and therefore obviously has uniform second-order asymptotics.
\par For admissible forward semigroup walks, one sees that $\mathscr{L}$ is continuous with respect to pointwise convergence of finitely supported functions as follows. Suppose that $f\in \fs$. We have
		\begin{equation*}
			\mathscr{L}g(z) = \sum_{w} g(w) \Lo \delta_z(w)
			= \left<f, \Lo \delta_z\right>,
		\end{equation*}
		where the last equality follows from the semi-locality of $\Lo$. It is clear that the right-hand side is continuous with respect to pointwise convergence in $\fs$.
\par Therefore, in the case of admissible forward semigroup walks, all the hypotheses of \hyperref[thm:main-2]{Main Theorem~\ref{thm:main-2}} are satisfied. Thus, from
		\begin{align*}
			\dist(x,y)\nabla_{\vv{yx}}\mathscr{L}(g)
			&= \left<f, \Lo \left(\delta_x - \delta_y \right)\right>,
		\end{align*}
		and upon taking the infimum over
		$
		f \in \Lip(1) \cap \fs
		$ with $\nabla_{\vv{xy}} f = 1$,
		we conclude the proof. 
\par For admissible backward semigroup walks, $f\in \fs$, one has
\begin{equation*}
	\mathscr{L}f(z) = \sum_{w} f(w) \Lo \delta_w(z)
	= \Lo f(z)
\end{equation*}
thus, the continuity under the pointwise convergence in $\fs$ is equivalent to the same property for the infinitesimal generator $\Lo$. Therefore, once again, we can apply \hyperref[thm:main-2]{Main Theorem~\ref{thm:main-2}}.
\par Based on the discussion at the beginning of \hyperref[subsec:var-form]{Section~\ref{subsec:var-form}} and the assumption 
$
\Lip(\G) \cap \mathsf{C}_{\sf b} \subseteq \Dom(\Lo)
$,
it is clear that in the generator representation we can replace the space $\Lip_1(\G)\cap \fs$ in
\begin{equation*}
	\Ric(x,y)
	=
	\inf_{\substack{f \in \Lip(1) \cap \fs\\
			\nabla_{\vv{xy}} f = 1 }}
	\nabla_{\vv{yx}} \mathcal{L}(f)
\end{equation*}
with spaces 
$
\Lip_1(\G) \cap \Dom(\Lo) 
$
or
$
\Lip_1(\G) \cap \cb
$.
\hfill\scalebox{1.2}{\qed}
\end{introthm}
\begin{example}\phantom{}\hspace{-10pt}(General graph Laplacian)
Let $\left(\G,\dist\right)$ denote a locally finite graph equipped with the combinatorial distance. Consider the backward Markovian walk generated by the Laplacian,
$
\Delta_{\omega, 0, \m}
$,
presented in \hyperref[ex:1]{Example~\ref{ex:1}}. Clearly by local-finiteness, the drift-less Laplacian $\Delta_{\omega, \m}$ is continuous w.r.t. pointwise convergence. Therefore, upon applying \hyperref[cor:main-2]{Main Corollary~\ref{cor:main-2}} -- \hyperref[item:GR2]{(\ref{item:GR2})}, we deduce that
\begin{equation*}
	\Ric(x,y)
	=
	\inf_{\substack{f \in \Lip(1) \cap \fs\\
			\nabla_{\vv{xy}} f = 1 }}
	\nabla_{\vv{yx}} \Delta_{\omega,0, \m}(f)
	=
	\inf_{\substack{f \in \Lip(1)\\
			\nabla_{\vv{xy}} f = 1 }}
	\nabla_{\vv{yx}} \Delta_{\omega,0, \m}(f).
\end{equation*}
Hence the limit-free formulation presented \cite{MW} is included in our framework.  
\end{example}
\section{Operator-theoretic Ricci curvature}\label{sec:op-theo-ric}
\begin{definition}[Weighted comparison principles]\label{defn:comp-principle}
	An operator $\mathcal{A}$ is said to satisfy
	\begin{itemize}
		\item \label{comp-item-1} a \emph{maximum principle with range $R\geq0$} if there exists a function
		$
			C:\G^2\times\R\to\R
		$
		such that, for any
		$
			f\in \mathsf{Dom}(\mathcal{A}) \cap \Lip_1(\G)
		$,
		and for all $z\in \B_R(x_0)$, we have
		\begin{equation*}
			\text{$f$ has a global maximum at $x_0$}
			\Longrightarrow
			\mathcal{A}(f)(z)
			\leq
			C(x_0,z,f(x_0)).
		\end{equation*}
		\item \label{comp-item-2} a \emph{two-sided comparison principle with range $R$} if there exists a non-negative function
		$
			D:\G^2\times\R\to\R
		$
		such that, for
		$
			f\in \mathsf{Dom}(\mathcal{A}) \cap \Lip_1(\G)
		$,
		and for all  $z\in\B_R(x_0)$, we have
		\begin{equation*}
			\text{$f$ has a global extremum at $x_0$}
			\Longrightarrow
			\left|\mathcal{A}(f)(z)\right|
			\leq
			D(x_0,z,f(x_0)).
		\end{equation*}
	\end{itemize}
	\par \emph{When the range is zero, we simply say that we have a maximum principle (respectively, a two-sided comparison principle).}	
\end{definition}
\par Suppose $(X,\dist_X)$ is a complete metric space. Let
\begin{equation*}
\nabla_{\vv{xy}} f := \dist(x,y)^{-1}\left(f(y)-f(x)\right), \quad x\neq y.
\end{equation*}
\par Let $\B$ be a Banach space of functions on $X$, and let
$
\mathcal A:\Dom(\mathcal A)\subseteq\B\to\B
$
be a linear operator.
\par Inspired by the variational characterization of the Ollivier--Ricci curvature, we propose, associated with the operator $\mathcal A$, the following three notions of operator-theoretic Ricci curvature with values in extended real numbers.
\begin{definition}
	\hfill
	\begin{itemize}
		\item The operator-theoretic Ollivier--Ricci curvature is defined as
		\begin{equation*}
		\mathcal Ric_{\mathcal{A}}(x,y)
		:=
		\inf_{\substack{f\in\Lip(1)\cap\mathsf{Dom}(\mathcal A)\\
				\nabla_{\vv{xy}}f=1}}
		\nabla_{\vv{yx}}\mathcal Af,
		\quad x\neq y.
		\end{equation*}
		\item The restricted operator-theoretic Ollivier--Ricci curvatures ${}^{\sf c}\Ric_{\mathcal{A}}$ and ${}^{\sf b}\Ric_{\mathcal{A}}$ are defined by
		\begin{equation*}
		{}^{\sf c}\Ric_{\mathcal{A}}(x,y)
		:=
		\inf_{\substack{
				f\in\Lip(1)\cap\mathcal{C}_{\sf cs}(\G)\cap\mathsf{Dom}(\mathcal A)\\
				\nabla_{\vv{xy}}f=1}}
		\nabla_{\vv{yx}}\mathcal Af,
		\quad x\neq y.
		\end{equation*}
		and
		\begin{equation*}
			{}^{\sf b}\Ric_{\mathcal{A}}(x,y)
			:=
			\inf_{\substack{
					f\in\Lip(1)\cap\mathcal{C}_{\sf b}(\G)\cap\mathsf{Dom}(\mathcal A)\\
					\nabla_{\vv{xy}}f=1}}
			\nabla_{\vv{yx}}\mathcal Af,
			\quad x\neq y,
		\end{equation*}
		respectively.
	\end{itemize}
\end{definition}
\par For a semigroup walk $\upmu_z^\varepsilon:=e^{\varepsilon\mathcal L}\updelta_z$, the Ollivier--Ricci curvature might not be well-defined. However, the operator-theoretic Ollivier--Ricci curvature is well-defined, although it may take the values $\pm\infty$.
\par The following results provide sufficient conditions for the well-definedness of the operator-theoretic Ricci curvature.
\begin{theorem}
	Suppose $\mathcal{L}$ satisfies the following properties:
	\begin{enumerate}
		\item [(OR1)] $\mathcal{L}$ is translation-invariant;
		\smallskip
		\item [(OR2)] $\mathcal{L}$ satisfies a two-sided comparison principle (see \hyperref[defn:comp-principle]{Definition~\ref{defn:comp-principle}}).
	\end{enumerate}
	Then $\mathcal Ric_{\mathcal{L}}(x,y)$ is finite.
\end{theorem}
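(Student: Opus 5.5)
The plan is to show separately that the infimum defining $\mathcal Ric_{\mathcal L}(x,y)$ is not $+\infty$ (the constraint set meets $\mathsf{Dom}(\mathcal L)$) and not $-\infty$ (there is a uniform lower bound). Both halves rest on one reduction: replace an arbitrary competitor $f$ by a normalized function that has a global minimum at $x$ and a global maximum at $y$. Such a function is exactly what (OR2) applies to.

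\emph{Normalization.} Fix $x\neq y$ and put $\dist:=\dist(x,y)$. By (OR1), adding a constant to $f$ changes neither $\nabla_{\vv{xy}}f$ nor $\nabla_{\vv{yx}}\mathcal Lf$, so we may assume $f(x)=0$; the constraint then reads $f(y)=\dist$. For such $f$ set
\begin{equation*}
g:=\big(f\vee 0\big)\wedge \dist .
\end{equation*}
This $g$ is again $1$-Lipschitz, satisfies $g(x)=0$ and $g(y)=\dist$, and has a global minimum at $x$ and a global maximum at $y$. The model competitor is $g_0:=\dist(x,\cdot)\wedge\dist$, which is of this form.

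\emph{Finiteness from above.} The infimum is $<+\infty$ as soon as one admissible competitor lies in $\mathsf{Dom}(\mathcal L)$. I would use $g_0$, which is bounded and Lipschitz, so it belongs to the domain under the paper's standing conventions (the domains considered contain $\Lip\cap\cb$ or can be tapered as in Lemma~\ref{lem:Lip-taper}). Then $\nabla_{\vv{yx}}\mathcal L g_0$ is a real number. In fact (OR2) bounds it directly: applying the principle at $x_0=x$ and at $x_0=y$ gives
\begin{equation*}
|\nabla_{\vv{yx}}\mathcal L g_0|\le \dist^{-1}\big(D(x,x,0)+D(y,y,\dist)\big).
\end{equation*}

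\emph{Finiteness from below.} For every normalized $g$ as above, (OR2) gives
\begin{equation*}
\nabla_{\vv{yx}}\mathcal Lg=\dist^{-1}\big(\mathcal Lg(x)-\mathcal Lg(y)\big)\ \ge\ -\dist^{-1}\big(D(x,x,0)+D(y,y,\dist)\big),
\end{equation*}
and this bound is independent of $g$ because the third argument of $D$ is pinned to $0$ and $\dist$. It remains to pass from $g$ to a general $f$, i.e.\ to show that the truncation does not raise the infimum: either $\nabla_{\vv{yx}}\mathcal Lf\ge\nabla_{\vv{yx}}\mathcal Lg$, or the infimum over all competitors equals the infimum over truncated ones.

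\emph{Main obstacle.} This last comparison is the step I expect to be hardest. The difference $h:=f-g$ vanishes at $x$ and $y$. It is $\le0$ where $f>\dist$ and $\ge0$ where $f<0$, so it has no global sign, and linearity plus (OR2) alone do not control $\mathcal Lh(x)-\mathcal Lh(y)$. My first attempt is to apply (OR2) to suitable $1$-Lipschitz functions built from $f$ that keep extrema at $x$ or $y$, such as $f\vee 0$ (global minimum at $x$) and $f\wedge\dist$ (global maximum at $y$). Since $g$ is obtained by one truncation after the other, linearity of $\mathcal L$ would let me split $\mathcal Lf-\mathcal Lg$ into pieces evaluated only at the extremal points. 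If that decomposition fails, the fallback is to assume the extra hypothesis that the infimum may be restricted to normalized competitors, as happens for positive semi-local operators. This reduces the statement to the uniform bound above.
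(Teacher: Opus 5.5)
There is a genuine gap. Your normalization and the uniform bound for the truncated competitor $g=(f\vee 0)\wedge d$, with $d:=\dist(x,y)$, are correct. But the proposal stops at the step that actually carries the theorem: passing from an arbitrary admissible $f$ to $g$. You leave this step open and claim that linearity plus (OR2) cannot control the sign-changing $h=f-g$. You then fall back on an extra hypothesis, which would prove a different statement.

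That claim is false, and the splitting you hint at closes the gap. With $f(x)=0$ and $f(y)=d$,
\[
h=f-g=(f\wedge 0)+(f-d)^{+}.
\]
Incidentally, $h>0$ where $f>d$ and $h<0$ where $f<0$, the reverse of what you wrote. Both pieces are $1$-Lipschitz, with $f\wedge 0\le 0$ and $(f-d)^+\ge 0$. Both vanish at $x$ \emph{and} at $y$, because $f(x)=0<d$ and $f(y)=d>0$. So each piece has a global extremum with value $0$ at $x$ and at $y$, and (OR2) gives $|\mathcal{L}h(x)|\le 2D(x,x,0)$ and $|\mathcal{L}h(y)|\le 2D(y,y,0)$. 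Together with your bound on $g$, this bounds $|\nabla_{\vv{yx}}\mathcal{L}f|$ independently of $f$.

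The truncation is in fact superfluous. The splitting $f=(f\vee 0)+(f\wedge 0)$ gives $|\mathcal{L}f(x)|\le 2D(x,x,0)$. The splitting $f=(f\vee d)+(f\wedge d)-d$, together with (OR1), gives $|\mathcal{L}f(y)|\le 2D(y,y,d)$.

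The paper avoids lattice operations and instead compares $f$ with the distance function. The function $\tfrac12(\dist(x,\cdot)-f)$ is $1$-Lipschitz, nonnegative, and zero at $x$; likewise $\tfrac12(\dist(y,\cdot)-f+d)$ has these properties at $y$. Linearity, (OR1) and (OR2) then place $\mathcal{L}f(x)$ within $2D(x,x,0)$ of $\mathcal{L}\dist(x,\cdot)(x)$, and $\mathcal{L}f(y)$ within $2D(y,y,0)$ of $\mathcal{L}\dist(y,\cdot)(y)$.

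That route needs $\dist(x,\cdot),\dist(y,\cdot)\in\mathsf{Dom}(\mathcal{L})$; this also makes the constraint set nonempty, replacing your ``finiteness from above''. Your lattice splitting instead needs $\mathsf{Dom}(\mathcal{L})$ to be stable under $f\mapsto f\wedge c$ and $f\mapsto f\vee c$. Neither assumption is in the statement, so state whichever one you use explicitly rather than appealing to a standing convention.
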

\begin{proof}[\footnotesize \textbf{Proof}]
	Since $\mathcal{L}$ is a translation-invariant operator, we may, without loss of generality, assume that a given test function satisfies
	$
	f(x)=0
	$ and $f(y)=\dist(x,y)$. The function
$
	\nicefrac{1}{2}\left(\dist(x,z)-f(z)\right)
$
	is a non-negative $1$-Lipschitz function that attains a global minimum at $x$. Therefore, by the comparison principle with range zero, we deduce
	\begin{equation*}
	\mathcal{L}\left(\dist(x,\cdot)\right)(x)-C_1(x)
	\le
	\mathcal{L}f(x)
	\le
	\mathcal{L}\left(\dist(x,\cdot)\right)(x)+C_1(x).
	\end{equation*}
	Similarly,
	\begin{equation*}
	\nicefrac{1}{2}\left(\dist(y,z)-f(z)+\dist(x,y)\right)
	\end{equation*}
	is non-negative and has a global minimum at $y$. Hence, by the hypotheses, we also obtain
	\begin{equation*}
	\mathcal{L}\left(\dist(x,\cdot)\right)(x)-C_1(y)
	\le
	\mathcal{L}f(y)
	\le
	\mathcal{L}\left(\dist(y,\cdot)\right)(y)+C_1(y).
	\end{equation*}
	Therefore, $\nabla_{\vv{yx}}\mathcal{L}f$ is bounded, and the infimum is finite.
\end{proof}
\begin{theorem}
	$\mathcal Ric_{\mathcal{L}}(x,y)$ is finite for all semi-local at controlled scale and translation-invariant operators $\mathcal{L}$ that are comparable to multiplication. 
\end{theorem}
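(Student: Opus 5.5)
The plan is to show that the quantity $\nabla_{\vv{yx}}\mathcal{L}f$ is bounded uniformly over all admissible test functions, which gives finiteness of the infimum. (Nonemptiness of the admissible class, e.g. $f=\dist(x,\cdot)$ suitably tapered, is implicit here, as in the previous theorem.) The argument reduces to bounded, finitely supported test functions via translation invariance and Lemma~\ref{lem:Lip-taper}, and then applies comparability to multiplication.

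\emph{Normalization.} Fix $x\neq y$ and a test function $f\in\Lip(1)\cap\Dom(\mathcal L)$ with $\nabla_{\vv{xy}}f=1$. Since $\mathcal{L}$ is translation invariant, we have $\mathcal{L}(f+c)=\mathcal{L}f$ for every constant $c$. Neither the constraint nor the objective $\nabla_{\vv{yx}}\mathcal L f$ changes under $f\mapsto f+c$, so we may assume $f(x)=0$.

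\emph{Tapering.} We apply Lemma~\ref{lem:Lip-taper} at the base point $x$ with $M=0$. Choose $\Uplambda>\max\{R(x),\dist(x,y)\}$, so that $y\in B_{\Uplambda}(x)$, and choose $\zeta>2\Uplambda+s(\Uplambda)$. These choices depend only on $x,y$, not on $f$. The lemma then yields $\widehat f\in\fs\cap\Lip_1(\G)$ with the following properties:
\begin{itemize}
\item $\widehat f=f$ on $B_{2\Uplambda+s(\Uplambda)}(x)$, so $\nabla_{\vv{xy}}\widehat f=1$ still holds;
\item $\mathcal L\widehat f=\mathcal L f$ on $B_{\Uplambda}(x)\ni x,y$;
\item $\widehat f\equiv 0$ outside $B_{2\zeta}(x)$.
\end{itemize}
Since $\widehat f$ is $1$-Lipschitz with $\widehat f(x)=0$ and vanishes outside $B_{2\zeta}(x)$, we get $\|\widehat f\|_\infty\le 2\zeta$. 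Moreover $\widehat f\in\fs\subseteq\Dom(\mathcal L)\cap\cb$ by the standing convention.

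\emph{Bound.} By comparability to multiplication,
\[
|\mathcal L f(x)|+|\mathcal L f(y)|=|\mathcal L\widehat f(x)|+|\mathcal L\widehat f(y)|\le \big(q(x)+q(y)\big)\,2\zeta.
\]
Hence $|\nabla_{\vv{yx}}\mathcal L f|\le 2\zeta\,(q(x)+q(y))/\dist(x,y)$ for every admissible $f$. The infimum defining $\mathcal Ric_{\mathcal L}(x,y)$ is therefore bounded below, and it is also bounded above as soon as the admissible class is nonempty, so it is finite.

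\emph{Main obstacle.} The key point is that the radius $\zeta$ in the tapering must be independent of $f$, which is what makes the final bound uniform. This is exactly why we normalize $f(x)=0$ (so $M=0$) before invoking Lemma~\ref{lem:Lip-taper}. We also have to check that the lemma's conclusion $\mathcal L\widehat f=\mathcal Lf$ applies at $y$, which is why $\Uplambda$ is taken larger than $\dist(x,y)$.
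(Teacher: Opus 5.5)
Your proposal is correct and follows the paper's proof essentially step by step. You normalize $f(x)=0$ by translation invariance, replace $f$ by its Lipschitz tapering $\widehat f$ from Lemma~\ref{lem:Lip-taper} with an $f$-independent support radius, and then bound $|\mathcal L\widehat f|$ at $x$ and $y$ through comparability to multiplication using $\|\widehat f\|_\infty\le 2\zeta$. Your choice $\Uplambda>\max\{R(x),\dist(x,y)\}$, which makes $\mathcal L\widehat f=\mathcal Lf$ hold at $y$ as well, and your remark on the nonemptiness of the admissible class are two small points that the paper's version glosses over.
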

\begin{proof}[\footnotesize \textbf{Proof}]
	We may assume that our test functions satisfy 	
	$
	f(x)=0
	$ and $f(y)=\dist(x,y)$. Consider the associated $1$-Lipschitz function $\widehat{f}$ as in \hyperref[lem:Lip-taper]{Lemma~\ref{lem:Lip-taper}}, with
	$
	\Uplambda>R(x)
	$.
	Thus,
$
	\nabla_{\vv{yx}}\mathcal{L}\widehat{f}
	=
	\nabla_{\vv{yx}}\mathcal{L}f
$.
	This argument shows that
	\begin{equation*}
	\inf_{\substack{
			f\in\mathcal{C}_{\sf fs}(\G)\\
			f\in\Lip(1)\\
			\nabla_{\vv{xy}}f=1}}
	\nabla_{\vv{yx}}\mathcal{L}f
	=
	\inf_{\substack{
			h\in\mathcal{C}_{\sf fs}(\G)\\
			h\in\Lip(1)\\
			\supp(h)\subset B_{2\zeta}(x)}}
	\nabla_{\vv{yx}}\mathcal{L}h,
	\end{equation*}
where
$
\zeta = \Uplambda + s(\Uplambda)
$.
Now, since any such $h$ satisfies $|h|\le 2\zeta$, the claim follows from the fact that $\mathcal{L}$ is comparable to multiplication (see \hyperref[defn:rough-comp-mult]{Definition~\ref{defn:rough-comp-mult}}).
\end{proof}
\begin{remark}
\textsl{Clearly, for two operators $\mathcal{L}_1$ and $\mathcal{L}_2$ as in the above two theorems, the curvature concavity 
	\begin{equation*}
		\Ric_{t\mathcal{L}_1+(1-t)\mathcal{L}_2}
		\ge
		t\Ric_{\mathcal{L}_1}
		+
		(1-t)\Ric_{\mathcal{L}_2}.
	\end{equation*}
holds and can be viewed as the infinitesimal (first-order) version of the concavity of $\Ric_\eps$.}
\end{remark}
	\renewcommand{\baselinestretch}{1.2}
\bibliography{OlRic-semgrp}
\bibliographystyle{amsalpha}
\vspace{20pt}
\end{document}